\documentclass[leqno]{amsart}
\usepackage[utf8]{inputenc}     
\usepackage[T1]{fontenc}
\usepackage{amsmath,mathtools, amssymb, amsfonts, latexsym, mathrsfs, pifont}
\makeatletter
\usepackage[inline]{enumitem} 
\setlist[itemize]{topsep=0pt,after=\vspace{1.5\baselineskip}}
\usepackage{color}
\usepackage[colorlinks=true]{hyperref}
\usepackage[margin=0.5in]{geometry}

\newtheorem{theorem}{Theorem}[section]

\newtheorem{lemma}[theorem]{Lemma}

\newtheorem{remark}{Remark}

\def\R{\mathbb R} \def\N{\mathbb N} 
\def\TM{T_{\textup{max}}} 

\def
\@cite
#1#2{[#1\if@tempswa, #2\fi]}
\makeatother

\usepackage{xparse}
\ExplSyntaxOn 
\int_new:N \g_tohi_const_int
\int_new:N \g_tohi_const_sub_int
\tl_new:N  \g_tohi_const_char_tl
\cs_new_protected:Nn \tohi_print_constant:nn 
{
  #1 \textsubscript {#2} 
} 

\NewDocumentCommand\resetconstants{m}
{
 \int_gincr:N \g_tohi_const_int
 \int_gzero:N \g_tohi_const_sub_int
 \tl_gset:Nn  \g_tohi_const_char_tl {#1}
}

\NewDocumentCommand\const{m}
{
  \tl_if_exist:cTF
   {
    c_tohi_const_\int_use:N\g_tohi_const_int _#1_tl
   }
   {
    \tl_use:c {c_tohi_const_\int_use:N\g_tohi_const_int _#1_tl }
   }
   {
    \int_gincr:N \g_tohi_const_sub_int
    \tl_const:cx {c_tohi_const_\int_use:N\g_tohi_const_int _#1_tl }
     { \exp_not:N\tohi_print_constant:nn {\g_tohi_const_char_tl }{\int_use:N \g_tohi_const_sub_int}}
    \tl_use:c {c_tohi_const_\int_use:N\g_tohi_const_int _#1_tl }
   }
}

\ExplSyntaxOff

\title[Positive total influx in two-species chemotaxis model]{
Global boundedness of a two-species attraction-attraction chemotaxis model with bilinear boundary influx}
\author[Gurusamy Arumugam, Silvia Frassu, Kwancheol Shin and Giuseppe Viglialoro]{$^{\dagger}$Gurusamy Arumugam, $^{\natural}$Silvia Frassu, $^{\flat}$Kwancheol Shin and $^{\natural, \star}$Giuseppe Viglialoro}
\makeatletter\@namedef{subjclassname@2020}{\textup{2020} Mathematics Subject Classification}\makeatother
\subjclass[2020]{Primary: 35B45, 35K55, 35Q92, 35B65. Secondary:  92C17.}
\keywords{Chemotaxis, Two-species interaction, Robin-type boundary conditions, Gradient nonlinearities, Boundedness. \\
\textit{$^\star$Corresponding author}: giuseppe.viglialoro@unica.it}

\begin{document}

\maketitle
{
\centerline{$^{\dagger}$Department of Mathematics, School of Advanced Sciences}
\centerline{Vellore Institute of Technology}
\centerline{Vellore, Tamil Nadu, India}
\medskip
\centerline{$^{\natural}$Dipartimento di Matematica e Informatica}
\centerline{Universit\`{a} degli Studi di Cagliari}
\centerline{Via Ospedale 72, 09124. Cagliari, Italy}
\medskip
\centerline{$^{\flat}$Institute of Mathematical Science}
\centerline{Ewha Womans University}
\centerline{Seoul, South Korea}
\medskip
}
\bigskip

\resetconstants{d}

\begin{abstract}
Since its introduction, the Keller--Segel model has become a cornerstone in the mathematical theory of chemotaxis and it has generated extensive analytical activity. Most studies consider homogeneous Neumann boundary conditions, which ensure mass conservation and simplify the qualitative analysis of solutions.

To the best of the authors' knowledge, at present chemotaxis models incorporating boundary conditions that generate inward fluxes have only been studied in these recent papers \cite{Le2024} and \cite{BAG2026}, and we believe that this topic deserves and it may attract further mathematical attention. In this sense, in the present paper we investigate a two-species chemotaxis system with positive total flux. The model consists of two interacting populations, $u$ and $w$, coupled through elliptic/parabolic chemical signals $v$ and $z$, and subject to Robin-type boundary conditions allowing inward fluxes that depend on the product of the cellular and chemical densities. Unlike the classical conservative setting, the total mass is not preserved and it exhibits quadratic growth in time, exactly in line with \cite{BAG2026} dealing with a single-species taxis model. We show that, within the considered framework, standard logistic damping is not sufficient to compensate for the mass increase induced by the positive boundary flux. To restore control of the dynamics, stronger dissipative mechanisms involving gradient-dependent damping terms are required. Under suitable assumptions, we establish the global existence and boundedness of classical solutions in the presence of logistic-gradient damping. We finally emphasize that our contribution is not a straightforward extension of \cite{BAG2026}. Indeed, some cases appear to be difficult to handle due to the additional complexity of the two-species setting.
\end{abstract}
\tableofcontents
\section{The two-species chemotaxis phenomena: introduction and state of the art}
\subsection{The classical Keller--Segel models: some indications and considerations} 
 Chemotaxis refers to the directed movement of cells or microorganisms in response to spatial gradients of chemical substances. Cells migrate either toward higher concentrations of attractant chemicals (positive chemotaxis) or away from repellent substances (negative chemotaxis). This process plays a fundamental role in various biological phenomena, including immune response, embryonic development, tumor progression, and microbial aggregation.

In mathematical modeling, chemotaxis is commonly represented by a cross-diffusion term that describes the directed movement of a cell population along the gradient of a chemical concentration field. For example, in continuum models such as the classical Keller--Segel systems \cite{Keller-1971-TBC,Keller-1971-MC,K-S-1970}, the chemotactic driven-term is typically expressed as $
-\chi \nabla \cdot (u \nabla v)$,
where \(u\) denotes the cell density, \(v\) the chemical concentration, and \(\chi>0\) the chemotactic sensitivity coefficient. Introducing such a transport term into an otherwise purely diffusive model can significantly alter the qualitative behavior of solutions. While diffusion alone (i.e., $\Delta u$) generally promotes smoothing and dispersion, the aggregation induced by chemotaxis (i.e., $-\chi \nabla \cdot (u \nabla v)$) may lead to a wide range of dynamics. In particular, for the resulting system (corresponding to the classical Keller--Segel equation for the cells' distribution)
\[
u_t=\Delta u-\chi \nabla \cdot (u \nabla v),
\]
solutions $(u,v)=(u(x,t),v(x,t))$ in space ($x$) and time ($t$) may exist globally, remain bounded, and converge to steady states, or exhibit more complex long-time behaviors. On the other hand, when the aggregating effect dominates diffusion, solutions may exist only locally and blow up in finite time, a phenomenon in which the cell density becomes unbounded as the time approaches a finite critical instant. 

Since the introduction of the aforementioned Keller--Segel model in the 1970s, an extremely vast literature has developed (see Remark \ref{RemarkData} below), encompassing numerous variants, extensions, and related concepts. Given the breadth of this field, any selection of results is necessarily partial and could risk underrepresenting the diversity of existing studies. Therefore, we do not attempt a comprehensive survey; instead, we focus on works most relevant to the questions addressed in the present study. In particular, we refer to \cite{r13,BellomoEtAl} for overviews of the main results and of mentioned extensions such as nonlinear diffusion, volume-filling effects, logistic sources, multi-species interactions, fluid-coupled systems, and nonlocal or stochastic formulations.
\begin{remark}\label{RemarkData}
According to available bibliometric data from the literature, the Keller-–Segel model and its variants have generated a steadily increasing number of publications in pure and numerical mathematics since 1970. The output is limited in the 1970s–1980s, increases in the 1990s, reaches its maximum in the period 2000–2019, and remains high, though with a slightly lower annual rate, from 2020 onwards.
\end{remark}
A natural generalization of the classical Keller--Segel chemotaxis model involves systems of interacting species, whose dynamics are influenced by chemical signals as well as competitive interactions or external sources.
Two-species chemotaxis systems capture how competing populations interact and respond to shared chemical signals, a key question in mathematical biology with implications for spatial ecology and pattern formation. Since our work focuses on this class of models, we briefly outline the cases considered in the following sections:
\begin{itemize}
    \item [$\triangleright$] The two-species one-chemical chemotaxis system with competition (\S\ref{SecTwo-OneComp});
    \item [$\triangleright$] The two-species two-chemical chemotaxis system without competition (\S\ref{SecTwo-TwoWithout});
    \item [$\triangleright$] The two-species two-chemical chemotaxis system with competition or logistic terms (\S\ref{SecTwo-TwoCompetandLog}).
\end{itemize}
\subsection{The two-species one-chemical chemotaxis system with competition}\label{SecTwo-OneComp}
 Let us start by introducing the formulation of a two-species chemotaxis phenomena with single chemical: 
\begin{equation}
\begin{cases}
u_t = \Delta u - \nabla \cdot \left( u\chi_1(w)\nabla w \right) +u(a_1 - b_1 u - c_1 v), & \textrm{on } \Omega\times (0,T_{\rm{max}}), \\
v_t = \Delta v - \nabla \cdot \left( v\chi_2(w) \nabla w \right) + v(a_2 - b_2 v - c_2 u), & \textrm{on } \Omega\times (0,T_{\rm{max}}),\\
\tau w_t = \Delta w - \mu w + \nu u + \lambda v, & \textrm{on } \Omega\times (0,T_{\rm{max}}), \\
u_\nu =v_\nu = w_\nu = 0, & \textrm{on } \partial \Omega\times (0,T_{\rm{max}}),\\
u(x,0)=u_0(x), v(x,0)=v_0(x),\tau w(x,0)=\tau w_0(x) & \textrm{for } x \in \bar{\Omega}.
\end{cases}\label{t}
\end{equation}
(Although we imagine that readers familiar with chemotaxis systems may already be well acquainted with the structure and meaning of the model, we nevertheless briefly describe it for completeness and clarity.)  System \eqref{t} describes the interaction between two competing species, \(u=u(x,t)\) and \(v=v(x,t)\), coupled through a chemical signal \(w=w(x,t)\) produced by both populations. Here, \(\Omega \subset \mathbb{R}^n\) denotes a bounded domain with smooth boundary, while \(t \in (0,T_{\rm{max}})\), where \(T_{\rm{max}}\) represents the maximal time of existence of solutions; $\tau$ may assume values $0$ or $1$, and all the coefficients are positive. The terms \(\Delta u\) and \(\Delta v\) account for random diffusion of the species, whereas the cross-diffusive fluxes
\(-\nabla \cdot \left( u\chi_1(w)\nabla w \right)\) and \(-\nabla \cdot \left( v\chi_2(w)\nabla w \right)\)
model chemotactic movement directed by the gradient of the chemical substance \(w\). The logistic/competitions growth components
\(u(a_1 - b_1 u - c_1 v)\) and \(v(a_2 - b_2 v - c_2 u)\) describe intra- and interspecific competition. The chemical signal evolves according to
\(\tau w_t = \Delta w - \mu w + \nu u + \lambda v\),
where \(\mu>0\) represents natural decay and \(\nu, \lambda >0\) measure the contribution of each species to chemical production. The homogeneous Neumann boundary conditions (i.e., $u_\nu =v_\nu = w_\nu = 0$ on $\partial \Omega \times (0,T_{\rm{max}})$, where $(\cdot)_\nu$ indicates the outward normal derivative on $\partial \Omega$) ensure no flux across \(\partial \Omega\), and the system is completed with nonnegative initial data \(u_0, v_0, \tau w_0\). 

We briefly review some representative results concerning system \eqref{t}, with particular emphasis on the global existence, boundedness, stabilization, and asymptotic behavior of solutions. The qualitative dynamics are known to depend sensitively on the interplay between the system parameters, the spatial dimension, and the initial data. Among the factors governing the evolution, a prominent role is played by the chemotactic sensitivities, the strength of the logistic damping and competitive interactions, and the total initial mass. The existing literature may be broadly divided according to whether $\tau=0$ or $\tau=1$.
\begin{itemize}
    \item[$\triangleright$] 
    In the parabolic--elliptic setting ($\tau=0$), the case of constant sensitivities, namely
    $\chi_i(w)=\chi_i>0$ ($i=1,2$), has been extensively investigated; see, for instance,
    \cite{r38,r54,r62,r55}. Related results for signal-dependent sensitivities, for instance of the form
    $\chi_i(w)=\frac{\chi_i}{w}$, can be found in \cite{r45,r61,r53}.
    \item[$\triangleright$] 
    In the fully parabolic framework ($\tau=1$), results corresponding to constant sensitivities are available in
    \cite{r39,r43,r42,r63}. For signal-dependent sensitivities, singular or not, we refer to
    \cite{r64,r52,r53,r45,r46,r47,r49}.
\end{itemize} 


\subsection{The two-species two-chemical chemotaxis system without competition}\label{SecTwo-TwoWithout} 
In the previous section, boundedness of solutions was ensured by logistic damping and interspecific competition for a system with a single signal. Here, we consider a system with two chemical signals and no logistic damping. Without these stabilizing effects, diffusion may fail to prevent aggregation: large initial data or strong chemotactic sensitivities can lead to blow-up, while solutions emanating from small data are expected to remain globally bounded. 

To establish a starting point, let us consider the following two-species two-chemicals system:
\begin{align}
\begin{cases}
      u_{t} =\nabla\cdot( D_1(u)\nabla u) - \chi \nabla\cdot(S_1(u)\nabla v),& \textrm{on } \Omega\times (0,T_{\rm{max}}), \\
       0= \Delta v - \alpha v+\beta w, & \textrm{on } \Omega\times (0,T_{\rm{max}}), \\
      w_{t} =\nabla\cdot(D_2(w)\nabla w)- \xi \nabla\cdot(S_2(z)\nabla z),& \textrm{on } \Omega\times (0,T_{\rm{max}}), \\ 
    0=\Delta z -\gamma z+\delta u, & \textrm{on } \Omega\times (0,T_{\rm{max}}), 
\end{cases}\label{rf}
\end{align}
in which we have omitted to specify the homogeneous Neumann boundary conditions and the initial data. In \cite{r5} the authors consider the problem in a bounded domain $\Omega \subset \mathbb{R}^2$, and for $D_i(s)=\alpha=\gamma=1$, $S_i(s)=s$ with $s \geq 0$, $i=1,2$, they obtain, inter alia, that for $
m_1 m_2 - 2\pi (m_1 \chi \beta + m_2 \xi \delta) > 0,$
where $
m_1 := \int_\Omega u_0(x) \, dx$, $m_2 := \int_\Omega w_0(x) \, dx$
there exist finite-time blow-up solutions to the system. On the other hand, when $\chi = \xi = \beta = \delta = 1$, the blow-up criterion reduces to 
$m_1 m_2 - 2\pi (m_1 + m_2) > 0$, and global boundedness of solutions is furthermore established for $\alpha = \gamma = 1$ under the condition $
\max\{m_1, m_2\} < 4\pi$. These results improve upon those in \cite{r6}, where global solvability holds only under the assumption that the total mass $m_1+m_2$ is below a certain threshold for $n=2$ or it is sufficiently small for $n\geq 3$, while finite-time blow-up can occur for large mass in two dimensions or for arbitrarily small mass when $n\geq 3$. On the other hand, leaving aside the case of linear diffusion, for $D_i(u) = (u+1)^{p_i-1}$ and $S_i(u) = u(u+1)^{q_i-1}$, the authors in \cite{r9} proved global boundedness of solutions to model \eqref{rf} under the conditions $q_1 < \frac{2}{n} + p_1 - 1$ and $q_2 < \frac{2}{n} + p_2 - 1$. 

For the fully parabolic version of system \eqref{rf} (in which the second and forth equations are parabolic), with $S_i(s) = s$ ($i=1,2$) and diffusion coefficients $D_1(s)$ and $D_2(s)$ behaving as $s^{m_1-1}$ and $s^{m_2-1}$, with $m_1, m_2 > 1$ respectively,  global boundedness of solutions was proved for suitable initial data, provided $m_1 > 2 - \frac{2}{n}$ and $m_2 > 2 - \frac{2}{n}$. 

Regarding results close to those presented, if we consider model \eqref{rf}, but where the equations for the chemical signals are replaced by
\begin{equation}\label{ChemicalConMedia}  
0 = \Delta v - \mu_2 + \beta w, \quad \mu_2 = \int_\Omega w, 
\qquad \text{and} \qquad 
0 = \Delta z - \mu_1 + \delta u, \quad \mu_1 = \int_\Omega u,
\end{equation}
we can mention that finite-time blow-up for the case \(D_1(s) \simeq s^{m_1-1}\) and \(D_2(s) \simeq s^{m_2-1}\), with \(m_1, m_2 > 1\), and $S_i(s) = s$ ($i=1,2$), has been detected in \cite{r8} under the condition
\[
m_1 + m_2 > \max \left\{ \frac{m_1 m_2 + 2 m_1}{n}, \frac{m_1 m_2 + 2 m_2}{n} \right\}.
\]
In this same context, i.e. model \eqref{rf} with equations for the chemicals as in  \eqref{ChemicalConMedia}, in \cite{r10} the authors investigate the situation where with $S_1(s)\simeq s^p$ and $S_2(s)\simeq s^q$, $p,q>0$ and $D_i(s)=1$, for  $i=1,2$. They established sharp threshold conditions separating global boundedness from finite-time blow-up, thereby characterizing the interplay between chemotactic attraction and nonlinear diffusion. (Further related investigations on comparable models have also been conducted in \cite{r7,r12,r66}.)
\subsection{The two-species two-chemical chemotaxis system with competition or with logistics}\label{SecTwo-TwoCompetandLog}
In preparation for the main model studied in this work, we first review results on chemotaxis systems involving two species and two chemical signals, with either competitive Lotka–Volterra interactions or classical logistic growth. While one-species chemotaxis systems have been extensively studied in terms of blow-up phenomena and global boundedness, the role of logistic or sub-logistic sources in preventing blow-up in two-species systems remains less explored. The added complexity of multiple interacting species and nonlinear source terms makes rigorous analysis more challenging, so most studies focus on globality issues. To be more precise, if we take into consideration the following system
\begin{equation}\label{rf3First}
\begin{cases}
u_{t} = \Delta u - \chi_1\nabla\cdot(u\nabla v) + \mu_1 u(1 - u - a_1 w), & \textrm{on } \Omega \times (0,T_{\rm max}), \\[1ex]
\tau v_{t} = \Delta v - v + w, & \textrm{on } \Omega \times (0,T_{\rm max}), \\[0.5ex]
w_{t} = \Delta w - \chi_2 \nabla\cdot(w \nabla z) + \mu_2 w(1 - w - a_2 u), & \textrm{on } \Omega \times (0,T_{\rm max}), \\[0.5ex]
\tau z_{t} = \Delta z - z + u, & \textrm{on } \Omega \times (0,T_{\rm max}),
\end{cases}
\end{equation}
questions such as global existence and asymptotic behavior of solutions, global stability of positive steady states, and competitive exclusion principles (where one species may dominate and drive the other to extinction) have been investigated. Further works extend the analysis to chemotaxis-competition systems with two signals, identifying conditions for coexistence, competitive exclusion, and convergence toward steady states; see, for example, \cite{ZhangLiuYang2017,ZhengMuMi2018,Zhang2018Competitive,TuMuZhengLin2018}.

At this stage, in order to proceed toward the introduction of the model of primary interest in this work, let us consider first the system obtained by replacing the Lotka--Volterra type interaction terms $\mu_1 u(1 - u - a_1 w)$ and $\mu_2 w(1 - w - a_2 u)$ with classical logistic source terms
\begin{equation}\label{classicalLogistiSources}
a_1 u - b_1 u^2 \quad \text{and} \quad a_2 w - b_2 w^2, \qquad a_1,a_2,b_1,b_2>0,
\end{equation}
respectively. Then system \eqref{rf3First} reduces to a two-signal attraction-attraction (i.e., below, $\chi_1,\chi_2>0$) model with logistic growths (here $a_1,a_2,b_1,b_2>0$), representing our starting point and reading as (here, naturally, $\tau_i\in \{0,1\}, i=1,2$)
\begin{equation}\label{rf3}
\begin{cases}
u_{t} = \Delta u - \chi_1\nabla\cdot(u\nabla v) +a_1 u - b_1 u^2, & \textrm{on } \Omega \times (0,T_{\rm max}), \\[1ex]
\tau_1 v_{t} = \Delta v - v + w, & \textrm{on } \Omega \times (0,T_{\rm max}), \\[0.5ex]
w_{t} = \Delta w - \chi_2 \nabla\cdot(w \nabla z) +a_2 w - b_2 w^2, & \textrm{on } \Omega \times (0,T_{\rm max}), \\[0.5ex]
\tau_2 z_{t} = \Delta z - z + u, & \textrm{on } \Omega \times (0,T_{\rm max}).
\end{cases}
\end{equation}
Let us summarize some results from the literature concerning global solvability and boundedness of solutions. Specifically, for solutions to system \eqref{rf3} emanating from sufficiently regular and nonnegative initial data $
u_0(x), \tau_1 v_0(x), w_0(x), \tau_2 z_0(x)$,
it is known that $T_{\rm max} = \infty$ and that $u, v, w, z$ are uniformly bounded on $(0, \infty)$ under the following conditions:  
\begin{itemize}
    \item [$\triangleright$] For $\tau_1 = 1$ and $\tau_2 = 0$, $n\geq 1$, whenever for some existing $\theta_0=\theta_0(\Omega,n) > 0$, $b_i$ and $\chi_i$ are such that $
        \frac{b_1 b_2}{\chi_1 \chi_2} > \theta_0$ (
    see \cite{TianHeZheng2022});
    \item [$\triangleright$] For $\tau_1 = 1$ and $\tau_2 = 0$, $n\geq 1$, whenever for some existing $\theta_1=\theta_1(n,b_1,\chi_1,\chi_2) \geq  0$ and $b_2>\theta_1$ (see \cite{AyazogluEkincioglu2025}, and observe that this result partially improves \cite{TianHeZheng2022});
    \item [$\triangleright$] For the case $\tau_1 = \tau_2 = 0$, with $a_2 = b_2 = 0$ and $a_1 u - b_1 u^2$ replaced by $
        f(u) = a_1 u - \frac{b_1 u^2}{\ln^p(u+e)}$, with $p \in [0,1)$,
    in a two-dimensional setting (see \cite{Le2024}).
\end{itemize}
\section{Positive total influx in chemotaxis; results and presentation of the model}
\subsection{Positive fluxes and gradient-dependent logistic sources}
All the chemotaxis models discussed so far have been investigated under homogeneous Neumann  (\textit{no flux}) boundary conditions, which naturally reflect the conservation of mass within an isolated domain. In the present work, however, we are interested in the case of positive boundary fluxes. This naturally leads to the consideration of Robin boundary conditions, which provide a flexible framework for describing interactions between the population and its surrounding environment.

Robin boundary conditions are commonly employed to model complex phenomena, including species movement along boundaries. (Examples and applications for parabolic equations can be found in \cite{Souplet_Gradient,AndreGia,QS,SunYudong,PP,PS}.)

In the context of taxis-driven processes, where boundary responses to chemical gradients play an important role, Robin boundary conditions offer a more realistic description than purely Dirichlet or Neumann conditions. In this regard, only very recently Robin boundary conditions have been studied in the context of chemotaxis systems. In particular, \cite{BAG2026} analyzed the classical Keller--Segel model, in both parabolic--parabolic and parabolic--elliptic forms, under positive flux boundary conditions. As the author will recognize from the same work, the presence of such positive fluxes requires, in order to ensure boundedness, a sufficiently strong logistic term, including gradient-dependent effects, i.e. logistic of the type
\begin{equation}\label{logistiGradiente}
f(u,|\nabla u|) = a u - b u^2 - c |\nabla u|^2, \quad a,b,c>0.
\end{equation}
Since the problem considered here is closely related to that studied in \cite{BAG2026}, we would like to emphasize an aspect that was not addressed in that work, most likely because the authors were unaware of the recent contribution \cite{MinLeJDEPosFlux}. More precisely, in \cite{MinLeJDEPosFlux}  the author considers a boundary influx proportional to a subquadratic power of the cell density, whereas in \cite{BAG2026} the influx is proportional to the product of the cell and chemical densities, thus exhibiting essentially (bilinear/cross-species) quadratic growth. 

This distinction is analytically significant, as the strength of the logistic damping required to prevent blow-up is closely related to the growth of the boundary influx. Since the positive flux considered in \cite{MinLeJDEPosFlux} exhibits subquadratic growth, boundedness can be ensured by a  weaker logistic mechanism than \eqref{logistiGradiente}; in particular, no gradient-dependent damping is needed, corresponding to the case $c=0$. (We will review in more detail, in Remark \ref{RemakFlussiPositiviSingleSpecies}, the results from the works \cite{MinLeJDEPosFlux} and \cite{BAG2026}.)
\begin{remark}
For the sake of completeness, we point out that gradient-dependent logistic terms as those in \eqref{logistiGradiente} have not appeared in the chemotaxis literature solely to counterbalance positive boundary fluxes. They have also been introduced in the context of no-flux boundary conditions, in order to prevent blow-up when standard logistic damping is too weak to ensure boundedness (see \cite{IshidaLankeitVigliloro-Gradient,LiEtAL-SAM-2025}). This highlights the broader applicability of such terms in controlling population growth and ensuring global existence of solutions.
\end{remark}
Motivated by the aforementioned works, in this paper we consider \textit{an attraction-attraction two-species chemotaxis system with two productive chemicals and gradient-dependent logistic terms, subject to positive total flux boundary conditions.}  
To the best of the authors' knowledge, no study has yet addressed the global boundedness of solutions for this system. Apart from this important aspect, as will be shown, the model under consideration can be regarded as a generalization of the system formulated in \eqref{rf3}.
\subsection{Presentation of the model and main result}
In this paper, we study the following two-species chemotaxis phenomenon formulated by 
\begin{equation}\label{problem}
\begin{cases}
u_t =\Delta u - \chi \nabla\cdot(u\nabla v)+a_1u-b_1u^{2} -c_1|\nabla u|^{2} & \textrm{in } \Omega \times (0,\TM),\\  
\tau v_t= \Delta v - v+w &\textrm{in } \Omega \times (0,\TM),\\
w_{t} =\Delta w- \xi \nabla\cdot(w\nabla z)+a_2w-b_2w^{2} -c_2|\nabla w|^{2} &\textrm{in } \Omega \times (0,\TM),\\ 
 \tau z_t=\Delta z -z+u &\textrm{in } \Omega \times (0,\TM),\\
 u_{\nu} =(\alpha-1)h_{1}uv\chi,  \, v_{\nu} =-h_{1}v &\textrm{on } \partial \Omega \times (0,\TM),\\  
w_{\nu} =(\alpha-1)h_{2}wz\xi, \, z_{\nu} =-h_{2}z &\textrm{on } \partial \Omega \times (0,\TM),\\  
 u(x,0) =u_{0}(x), \tau v(x,0) =\tau v_{0}(x), w(x,0) = w_{0}(x), \tau z(x,0) =\tau z_{0}(x), & x \in \bar{\Omega},
\end{cases}
\end{equation}
where $\Omega \subset \R^n$, $n\geq 1$, is a bounded and smooth domain, $\chi, \xi, a_1,a_2, b_1, b_2, c_1, c_2, h_1, h_2 >0$, $\alpha \in [0,1]$ and $\tau \in \{0,1\}$. Moreover, $u_0, \tau v_0, w_0$ and $\tau z_0$ are sufficiently regular nonnegative initial data, while $\TM$ indicates the maximal existence time of solutions to the above problem and $\nu$ is the outward normal vector to the boundary of $\Omega$. (Let us observe that setting $c_1=c_2=h_1=h_2=0$ in model \eqref{problem}, the mechanism formulated in \eqref{rf3} is recovered; however, this limiting case is not of interest here, since our aim is precisely to highlight the interplay between positive values of the parameters $h_i$ and $c_i$.)

We will establish what follows: 
\begin{theorem}\label{theoremlocal}
For some $\delta\in(0,1)$ and $n\in \N$, let $\Omega \subset \R^n$ be a bounded domain of class $C^{2+\delta}$, $\tau=0$, $\chi, \xi, a_1,a_2, c_1, c_2, h_1, h_2 >0$ and $\alpha \in [0,1]$. Then there exists $C_{\partial \Omega}=C_{\partial \Omega}(\Omega, n)$ such that for every $u_0, w_0: \bar{\Omega}  \rightarrow \mathbb{R}^+$, with $u_0, w_0 \in C^{2+\delta}(\bar\Omega)$ complying with $u_{0\nu}=(\alpha-1)\chi h_1 u_0 v_0$ and $w_{0\nu}=(\alpha-1)\xi h_2 w_0z_0$ on $\partial \Omega$ the following conclusion holds true:
whenever either 
\begin{equation}\label{Estimate_b}
b_1> c_1 + \frac{(\xi \alpha)^2 h_2 C_{\partial \Omega}}{16 c_2}, \quad   
b_2 > c_2 + \frac{(\chi \alpha)^2 h_1 C_{\partial \Omega}}{16 c_1} \quad \textrm{(being $\alpha>0$}),  
\end{equation}
or
\begin{equation}\label{Estimate_Bag0}
\alpha \max\{\chi,\xi\} <\min\left\{4c_1, 4 c_2, \frac{2 b_1}{3},\frac{2 b_2}{3}\right\}, 
\end{equation}
problem \eqref{problem} admits a unique solution
\begin{equation*}
(u,v, w, z)\in C^{2+\delta,1+\frac{\delta}{2}}( \Bar{\Omega} \times [0, \infty))\times 
C^{2+\delta,\frac{\delta}{2}}( \Bar{\Omega} \times [0, \infty)) \times
C^{2+\delta,1+\frac{\delta}{2}}( \Bar{\Omega} \times [0, \infty))\times 
C^{2+\delta,\frac{\delta}{2}}( \Bar{\Omega} \times [0, \infty))
\end{equation*}
such that $0\leq u, v, w, z \in L^\infty(\Omega \times (0,\infty)).$ 
\end{theorem}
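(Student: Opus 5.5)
A local well-posedness result combined with an extensibility criterion reduces the statement to a uniform $L^\infty$ bound. By the standard fixed-point/Schauder argument used in \cite{BAG2026}, the elliptic--parabolic problem \eqref{problem} with $\tau=0$ has a unique classical solution on $\Omega\times(0,\TM)$ in the stated Hölder classes. The compatibility conditions on $u_0,w_0$ are exactly what is needed for the $C^{2+\delta,1+\delta/2}$ regularity up to $t=0$. Moreover, either $\TM=\infty$ or $\|u(\cdot,t)\|_{L^\infty}+\|w(\cdot,t)\|_{L^\infty}\to\infty$ as $t\nearrow\TM$.

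Nonnegativity follows from the maximum principle. For $v,z$ we use the elliptic equations with $h_iv\ge0$ on the boundary. For $u,w$ the boundary flux $(\alpha-1)h_1uv\chi$ is linear in $u$ with bounded coefficient on compact time intervals, and the gradient term $-c_1|\nabla u|^2$ vanishes on $\{u=0\}$.

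Next comes an energy estimate for $\int_\Omega u+\int_\Omega w$; as in \cite{BAG2026}, standard logistic damping alone will not suffice, and the gradient term is essential. Integrating the $u$-equation gives
\[
\frac{d}{dt}\int_\Omega u=\int_{\partial\Omega}(u_\nu-\chi u v_\nu)+a_1\int_\Omega u-b_1\int_\Omega u^2-c_1\int_\Omega|\nabla u|^2 .
\]
Since $u_\nu-\chi uv_\nu=(\alpha-1)h_1\chi uv+\chi h_1uv=\alpha\chi h_1 uv$, the boundary contribution is $\alpha\chi h_1\int_{\partial\Omega}uv$, and similarly $\alpha\xi h_2\int_{\partial\Omega}wz$ for $w$.

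The central step is to control these bilinear boundary terms.

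\emph{Under \eqref{Estimate_b}.} Since $v$ solves $-\Delta v+v=w$ with $v_\nu=-h_1v$, testing by $v$ gives $\|v\|_{W^{1,2}}\le C\|w\|_{L^2}$. A trace inequality of the form
\[
\int_{\partial\Omega}\varphi^2\le C_{\partial\Omega}\Big(\int_\Omega|\nabla\varphi|^2+\int_\Omega\varphi^2\Big),
\]
where $C_{\partial\Omega}$ is the constant in the statement, then gives
\[
\alpha\chi h_1\int_{\partial\Omega}uv\le\varepsilon\int_{\partial\Omega}u^2+\frac{(\alpha\chi h_1)^2}{4\varepsilon}\int_{\partial\Omega}v^2 .
\]
The term $\int_{\partial\Omega}u^2$ goes to $c_1\int_\Omega|\nabla u|^2+c_1\int_\Omega u^2$ by choosing $\varepsilon=c_1/C_{\partial\Omega}$; this is why the threshold involves $b_1>c_1+\dots$. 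The term $\int_{\partial\Omega}v^2$, via the elliptic bound and the boundary Robin identity
\[
\int_\Omega|\nabla v|^2+\int_\Omega v^2+h_1\int_{\partial\Omega}v^2=\int_\Omega vw,
\]
is controlled by a multiple of $\int_\Omega w^2$, producing the cross constant $\frac{(\chi\alpha)^2h_1C_{\partial\Omega}}{16c_1}$ that $b_2$ must absorb. Doing the same for $w,z$ and summing, \eqref{Estimate_b} leaves a strictly negative multiple of $\int_\Omega u^2+\int_\Omega w^2$. Together with Young's inequality this yields $y'\le C-\kappa y$ for $y=\int_\Omega u+\int_\Omega w$, so $\sup_t(\|u\|_{L^1}+\|w\|_{L^1})<\infty$, and also $\int_t^{t+1}\int_\Omega(u^2+w^2+|\nabla u|^2+|\nabla w|^2)$ is uniformly bounded.

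\emph{Under \eqref{Estimate_Bag0}.} Here I would instead use pointwise comparison on the boundary via $h_1\int_{\partial\Omega}v\,\cdot\,\le\int_\Omega w$-type identities, and Young's inequality $uv\le\frac12(u^2+v^2)$ together with $\int_{\partial\Omega}v^2\lesssim\int_\Omega w^2$ from the identity above. The factors $4c_i$ and $\frac{2b_i}{3}$ arise from splitting the constants.

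Finally, we bootstrap to $L^\infty$. For $p>1$ we test the $u$-equation with $u^{p-1}$. The chemotactic term gives $\frac{\chi(p-1)}{p}\int_\Omega u^p w$ after using $\Delta v=v-w$, plus a boundary term. The gradient term $-c_1\int_\Omega u^{p-1}|\nabla u|^2$ provides extra dissipation $\sim\int_\Omega|\nabla u^{(p+1)/2}|^2$. Boundary integrals like $\int_{\partial\Omega}u^pv$ are handled by the trace inequality applied to $u^{(p+1)/2}$, with $v$ controlled by elliptic regularity from $L^q$-bounds on $w$. Coupling the $u$ and $w$ estimates by induction in $p$ gives $L^p$ bounds for all $p$. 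Elliptic regularity then gives $W^{1,\infty}$ bounds for $v,z$, and a Moser/Alikakos iteration or semigroup argument with the Robin data gives $L^\infty$ bounds for $u,w$. The extensibility criterion then yields $\TM=\infty$.

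I expect the main obstacle to be the $L^1$/$L^2$ step: matching the boundary influx precisely against the gradient damping through the trace constant, so that exactly the thresholds \eqref{Estimate_b}--\eqref{Estimate_Bag0} appear, with the cross coupling between species handled consistently.
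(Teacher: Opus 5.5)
Your treatment of \eqref{Estimate_b} is essentially the paper's argument. You use Young with $\varepsilon=c_1/C_{\partial\Omega}$, then the trace inequality, then the Robin identity for $v$; since $vw-v^2\le\frac{w^2}{4}$, this gives exactly $\frac{(\chi\alpha)^2h_1C_{\partial\Omega}}{16c_1}\int_\Omega w^2$. Your $L^p$ step also follows the paper's lines.

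\textbf{The gap: condition \eqref{Estimate_Bag0}.} The route you actually spell out does not reach it. You split $uv$ by Young \emph{on $\partial\Omega$}, absorb $\int_{\partial\Omega}u^2$ via the trace inequality into $c_1\int_\Omega(u^2+|\nabla u|^2)$, and use $\int_{\partial\Omega}v^2\le\frac{1}{4h_1}\int_\Omega w^2$. Any version of this leaves the product $h_1C_{\partial\Omega}$ in the threshold, and optimizing the split only reproduces a condition of type \eqref{Estimate_b}. Condition \eqref{Estimate_Bag0} contains neither $h_i$ nor $C_{\partial\Omega}$. So for large $h_1$ it holds while your estimate cannot close, and ``the factors $4c_i$ and $\frac{2b_i}{3}$ arise from splitting the constants'' cannot be turned into a proof.

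\textbf{The missing idea: cancel the influx instead of estimating it.} The paper does this through the elliptic $v$-equation. It tests $0=\Delta v-v+w$ with $\beta(u+v)$, where $\alpha\max\{\chi,\xi\}\le\beta<\min\{4c_1,4c_2,\frac{2b_1}{3},\frac{2b_2}{3}\}$.
\begin{itemize}
\item Because $v_\nu=-h_1v$, the test produces $-\beta h_1\int_{\partial\Omega}(uv+v^2)$. The piece $-\beta h_1\int_{\partial\Omega}uv$ dominates the influx $\alpha\chi h_1\int_{\partial\Omega}uv$, so no trace estimate is needed.
\item If the identity you allude to is $h_1\int_{\partial\Omega}uv=\int_\Omega uw-\int_\Omega uv-\int_\Omega\nabla u\cdot\nabla v$, that is the right tool. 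But the boundary term must be replaced by it, not split by Young.
\item The gradient terms $-c_1\int_\Omega|\nabla u|^2-\beta\int_\Omega\nabla u\cdot\nabla v-\beta\int_\Omega|\nabla v|^2$ are nonpositive by completing the square exactly when $\beta\le 4c_1$.
\item Adding the symmetric $w,z$ computation leaves $2\beta\int_\Omega uw+\beta\int_\Omega vw+\beta\int_\Omega uz$. Young bounds this by $\frac32\beta\int_\Omega(u^2+w^2)+\frac\beta2\int_\Omega(v^2+z^2)$.
\item The last part is absorbed by $-\beta\int_\Omega(v^2+z^2)$, and the first by $b_1,b_2$ when $\frac32\beta<b_i$.
\end{itemize}
This is where $4c_i$ and $\frac{2b_i}{3}$ come from.

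\textbf{Two lesser points.}
\begin{itemize}
\item An $L^\infty$ blow-up alternative is not automatic here. With the natural-growth term $-c_1|\nabla u|^2$ and the nonlinear Robin condition, excluding $C^{2+\delta}$ blow-up of bounded solutions needs a $C^{1+\delta}$ estimate. The paper supplies this through Lemma \ref{LemmaMoserType}, which requires only $L^1$ bounds and $\nabla v,\nabla z\in L^\infty$, so no Moser iteration is needed.
\item No induction in $p$ is needed. The term $\frac{2c_1}{(p+1)^2}\int_\Omega|\nabla u^{\frac{p+1}{2}}|^2$, combined with Gagliardo--Nirenberg and the mass bound, absorbs any multiple of $\int_\Omega u^{p+1}$, and likewise for $w$, for every $p$ at once.
\end{itemize}
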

\begin{theorem}\label{theoremlocalParab}
For some $\delta\in(0,1)$ and $n\in \N$, let $\Omega \subset \R^n$ be a bounded domain of class $C^{2+\delta}$, $\tau=1$, $\chi, \xi, a_1,a_2, c_1, c_2, h_1, h_2 >0$ and $\alpha \in [0,1]$. Then there exists $C_{\partial \Omega}=C_{\partial \Omega}(\Omega, n)$ such that for every $u_0, v_0, w_0, z_0: \bar{\Omega}  \rightarrow \mathbb{R}^+$, with $u_0, v_0, w_0, z_0 \in C^{2+\delta}(\bar\Omega)$ complying with $u_{0\nu}=(\alpha-1)\chi h_1 u_0 v_0$,  $v_{0\nu}=-h_1 v_0$, 
$w_{0\nu}=(\alpha-1)\xi h_2 w_0z_0$ and $z_{0\nu}=- h_2 z_0$ on $\partial \Omega$ the following conclusion holds true:
whenever 
\begin{equation}\label{Estimate_bP}
b_1> c_1 + \frac{(\xi \alpha)^2 h_2 C_{\partial \Omega}}{16 c_2}, \quad   
b_2 > c_2 + \frac{(\chi \alpha)^2 h_1 C_{\partial \Omega}}{16 c_1} \quad \textrm{(being $\alpha>0$}),  
\end{equation}
problem \eqref{problem} admits a unique solution
\begin{equation*}
(u,v, w, z)\in (C^{2+\delta,1+\frac{\delta}{2}}( \Bar{\Omega} \times [0, \infty)))^4
\end{equation*}
such that $0\leq u, v, w, z \in L^\infty(\Omega \times (0,\infty)).$ 
\end{theorem}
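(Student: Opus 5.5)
The proof follows the scheme that is standard for Keller--Segel type systems. First comes local well-posedness together with an extensibility criterion. Then a priori bounds are established in $L^1$, then in $L^p$ for large $p$, and finally in $L^\infty$; the positive boundary influx and the cross-coupling of the two species enter mainly at the $L^1$ level. More precisely, I would first invoke a Schauder fixed-point argument in $C^{2+\delta,1+\frac\delta2}$, using the compatibility conditions on the initial data assumed in the statement. This gives a unique nonnegative classical solution on $\bar\Omega\times[0,\TM)$ with the dichotomy
\[
\TM=\infty \quad\textrm{or}\quad \limsup_{t\to\TM}\bigl(\|u(\cdot,t)\|_{L^\infty(\Omega)}+\|w(\cdot,t)\|_{L^\infty(\Omega)}\bigr)=\infty .
\]
Nonnegativity follows from the maximum principle. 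For $v$ and $z$ it uses the Robin conditions $v_\nu=-h_1v$ and $z_\nu=-h_2z$. For $u$ and $w$ it uses the fact that $u\equiv0$ is a subsolution of the nonlinear boundary problem.

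\textbf{Step 1 ($L^1$-type bound).} I integrate the $u$-equation. Using $u_\nu=(\alpha-1)\chi h_1uv$ and $v_\nu=-h_1v$, the boundary terms combine into a net influx:
\[
\frac{d}{dt}\int_\Omega u=\alpha\chi h_1\int_{\partial\Omega}uv+a_1\int_\Omega u-b_1\int_\Omega u^2-c_1\int_\Omega|\nabla u|^2 .
\]
The analogous identity holds for $w$, with influx $\alpha\xi h_2\int_{\partial\Omega}wz$.

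I then take the combination $y(t)=\int_\Omega u+\int_\Omega w$ and estimate the boundary integrals. By Young's inequality and a trace inequality of the form $\int_{\partial\Omega}f^2\le C_{\partial\Omega}(\int_\Omega|\nabla f|^2+\int_\Omega f^2)$, I obtain
\[
\alpha\xi h_2\int_{\partial\Omega}wz\le c_2\int_\Omega(|\nabla w|^2+w^2)+\frac{(\alpha\xi)^2h_2C_{\partial\Omega}}{16c_2}\Bigl(\int_\Omega|\nabla z|^2+\int_\Omega z^2\Bigr),
\]
up to the exact arrangement of constants. The $z$-terms are controlled by $\int_\Omega u^2$. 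To see this, test the $z$-equation by $z$. The boundary term $-h_2\int_{\partial\Omega}z^2$ is favourable, and one gets
\[
\frac12\frac{d}{dt}\int_\Omega z^2+\int_\Omega|\nabla z|^2+\frac12\int_\Omega z^2\le\frac12\int_\Omega u^2 .
\]
Adding suitable multiples of $\int_\Omega z^2$ and $\int_\Omega v^2$ to $y$ then gives an ODI $y'+\varepsilon y\le C$. The coefficient of $\int_\Omega u^2$ that must be absorbed is precisely $c_1+\frac{(\xi\alpha)^2h_2C_{\partial\Omega}}{16c_2}$, and hypothesis \eqref{Estimate_bP} guarantees that $b_1$ exceeds it; the same holds symmetrically for $w$. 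The result is uniform-in-time bounds for $\int_\Omega u$, $\int_\Omega w$, $\int_\Omega v^2$ and $\int_\Omega z^2$, together with time-integrated bounds on $\int_\Omega(u^2+w^2+|\nabla u|^2+|\nabla w|^2)$ over unit intervals.

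\textbf{Step 2 ($L^p$ bounds).} I test the $u$-equation with $u^{p-1}$. The damping term $-c_1(p-1)\int_\Omega u^{p-1}|\nabla u|^2$ and the diffusion term together control $\int_\Omega|\nabla u^{\frac{p+1}{2}}|^2$. The new boundary contribution is of the form $\int_{\partial\Omega}u^pv$, and I would handle it by the trace inequality applied to $u^{\frac{p+1}{2}}$ combined with the available bounds on $v$ on the boundary. In parallel, I would test the $v$- and $z$-equations with powers of $|\nabla v|^2$ and $|\nabla z|^2$ to handle the cross term $\int_\Omega u^{p-1}\nabla u\cdot\nabla v$. Here the quadratic absorption $-b_1\int_\Omega u^{p+1}$, combined with maximal Sobolev regularity for the parabolic $v$-equation, allows an iterative bootstrap in $p$ as in \cite{BAG2026}. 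The same is done for $w$ and $z$.

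\textbf{Step 3 ($L^\infty$ and conclusion).} Once $\|u\|_{L^p(\Omega)}$ and $\|w\|_{L^p(\Omega)}$ are bounded for some $p>n$, parabolic regularity for the $v$- and $z$-equations gives $\nabla v,\nabla z\in L^\infty$. A Moser--Alikakos iteration then yields $u,w\in L^\infty(\Omega\times(0,\TM))$, so $\TM=\infty$ by the extensibility criterion, and Schauder theory gives the stated H\"older regularity.

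I expect the main obstacle to be Step 1, namely closing the estimate despite the cross-species boundary terms. The influx for $u$ is driven by $v$, which is produced by $w$, and vice versa. Hence the trace estimate for each species' influx must be paid for with the gradient damping $c_i$ of the other species and with the logistic $b_j$, which is exactly the structure of \eqref{Estimate_bP}. Unlike the elliptic case, where $v$ and $z$ are directly controlled, here one must add the time-derivative functionals $\int_\Omega v^2$ and $\int_\Omega z^2$ with carefully chosen weights so that all constants match.
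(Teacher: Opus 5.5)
Your Step 1 has a genuine gap, and it sits exactly where hypothesis \eqref{Estimate_bP} comes from. You add the right functionals $\int_\Omega v^2$ and $\int_\Omega z^2$, but you discard the Robin term $-h_2\int_{\partial\Omega}z^2$ as merely ``favourable'' and control $\int_{\partial\Omega}z^2$ by the trace inequality instead.

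That route does not give the constant you display. In the $L^1$ computation the only gradient of $w$ available is $-c_2\int_\Omega|\nabla w|^2$, so Young's inequality must be split as $\alpha\xi h_2\int_{\partial\Omega}wz\le\frac{c_2}{C_{\partial\Omega}}\int_{\partial\Omega}w^2+\frac{(\alpha\xi h_2)^2C_{\partial\Omega}}{4c_2}\int_{\partial\Omega}z^2$. Tracing $z$ then puts $\frac{(\alpha\xi h_2)^2C_{\partial\Omega}^2}{4c_2}$ in front of $\int_\Omega(|\nabla z|^2+z^2)$. Your displayed inequality with $\frac{(\alpha\xi)^2h_2C_{\partial\Omega}}{16c_2}$ is in fact false in general: for constant $w,z$ with optimal ratio it would force $\sqrt{h_2}\,|\partial\Omega|\le\frac12\sqrt{C_{\partial\Omega}}\,|\Omega|$, which fails for large $h_2$.

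Absorbing these $z$-terms through $\frac12\frac{d}{dt}\int_\Omega z^2+\int_\Omega|\nabla z|^2+\frac12\int_\Omega z^2\le\frac12\int_\Omega u^2$ costs at least $\frac{(\xi\alpha h_2)^2C_{\partial\Omega}^2}{4c_2}\int_\Omega u^2$. So your argument proves boundedness under a condition quadratic in $h_2$. No choice of $C_{\partial\Omega}=C_{\partial\Omega}(\Omega,n)$ turns that into \eqref{Estimate_bP} uniformly in $h_2>0$.

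The missing idea is that the Robin dissipation of the signal is the sink, not a by-product. The paper adds $\frac{(\xi\alpha)^2h_2C_{\partial\Omega}}{8c_2}\int_\Omega z^2$ to the functional. Its boundary contribution $-\frac{(\xi\alpha)^2h_2^2C_{\partial\Omega}}{4c_2}\int_{\partial\Omega}z^2$ cancels the boundary $z^2$-term from Young exactly, so neither a trace inequality nor $\nabla z$ is needed for $z$. Then $\frac{(\xi\alpha)^2h_2C_{\partial\Omega}}{4c_2}\int_\Omega uz\le\epsilon_2\int_\Omega u^2+\frac{1}{4\epsilon_2}\Bigl(\frac{(\xi\alpha)^2h_2C_{\partial\Omega}}{4c_2}\Bigr)^2\int_\Omega z^2$. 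This leaves a negative coefficient on $\int_\Omega z^2$ if and only if $\epsilon_2>\frac{(\xi\alpha)^2h_2C_{\partial\Omega}}{16c_2}$. Requiring also $\epsilon_2<b_1-c_1$ is precisely the first condition in \eqref{Estimate_bP}, and symmetrically for $v$ and $w$.

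The same issue recurs in Step 2. After Step 1 there are no ``available bounds on $v$ on the boundary'', only $\sup_t\int_\Omega v^2$. The paper uses the sink again at level $p$. It splits $\alpha\chi h_1\int_{\partial\Omega}u^pv\le\frac{2c_1}{(p+1)^2C_{\partial\Omega}}\int_{\partial\Omega}u^{p+1}+h_1\mathcal{C}_1\int_{\partial\Omega}v^{p+1}$ and adds $\frac{\mathcal{C}_1}{p+1}\int_\Omega v^{p+1}$ to the energy. Then $-h_1\mathcal{C}_1\int_{\partial\Omega}v^{p+1}$ cancels the second piece, and the production term costs only $\int_\Omega w^{p+1}$.

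Your bootstrap and the $\int_\Omega|\nabla v|^{2q}$-functionals are unnecessary; under Robin conditions their boundary terms are not sign-definite anyway. Integrating by parts turns the cross term into $\frac{p-1}{p}\chi\int_{\partial\Omega}u^pv_\nu-\frac{p-1}{p}\chi\int_\Omega u^p\Delta v$, with $v_\nu=-h_1v\le 0$. After Young, the $\int_\Omega|\Delta v|^{p+1}$ part is controlled in $e^s$-weighted time-integrated form by \eqref{tau1}. Every $\int_\Omega u^{p+1}$ and $\int_\Omega w^{p+1}$ is then absorbed at once by Gagliardo--Nirenberg, using only the mass bound and the term $\int_\Omega|\nabla u^{\frac{p+1}{2}}|^2$ supplied by the gradient damping; $b_1$ plays no role there.

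Finally, your $L^\infty$-dichotomy is not what the fixed-point construction gives, which is blow-up of the $C^{2+\delta}$-norm. With the quadratic gradient term, bridging the two needs a H\"older/gradient estimate. The paper obtains this from Lemma \ref{LemmaMoserType}, which requires only the $L^1$ bound on $u$ and $\nabla v\in L^\infty$, so no Moser iteration is needed.
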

\begin{remark}[Positive total flux and known results on single-species models]\label{RemakFlussiPositiviSingleSpecies}
As it is well known, for chemotaxis systems of form \eqref{problem}, the vectorial total flux associated with the species \(u\) is given by the vector field
$
\nabla u - \chi u \nabla v$ on $\partial\Omega \times (0,T_{\max})$.
Similarly, the vectorial total flux associated with the second species \(w\) is
$\nabla w - \xi w \nabla z.$ By virtue of the boundary conditions satisfied by \(u\), \(v\), \(w\), and \(z\), it follows that the total fluxes have the following expressions: 
\begin{equation}\label{toalFluxesExpression}
\bigl(\nabla u - \chi u \nabla v\bigr)\cdot \nu
    = \alpha h_1 \chi uv,
\qquad
\bigl(\nabla w - \xi w \nabla z\bigr)\cdot \nu
    = \alpha h_2 \xi wz.
\end{equation}
Hence, the resulting total fluxes are quadratic at the boundary, being essentially proportional to the bilinear forms generated by the interaction between the population densities and the corresponding chemical signals.

In this context, restricting our attention to the single-species setting, i.e. to the model
\begin{equation}\label{problemOneSpecies}
\begin{cases}
u_t =\Delta u - \chi \nabla\cdot(u\nabla v)+a_1u-b_1u^{2} -c_1|\nabla u|^{2} & \textrm{in } \Omega \times (0,\TM),\\  
\tau v_t= \Delta v - v+u &\textrm{in } \Omega \times (0,\TM),\\
 u(x,0) =u_{0}(x), \tau v(x,0) =\tau v_{0}(x)& x \in \bar{\Omega},
\end{cases}
\end{equation}
we emphasize the following recent results which, to the best of our knowledge, are the only ones available in the literature that allow for positive total fluxes through the boundary: 
\begin{itemize}
\item [$\triangleright$] For $c_1=0$, and  
under nonlinear Neumann boundary conditions leading to $u_{\nu} - \chi u v_{\nu}  = |u|^p \quad \text{with } p>1$, in \cite{Le2024} it is established that if \(p < \frac{3}{2}\) and \(b_1 > 0, n = 2\), or \(b_1\) is sufficiently large when \(n \ge 3\), then the parabolic-elliptic chemotaxis system in \eqref{problemOneSpecies} admits a unique nonnegative global-in-time classical solution that is bounded in \(\Omega \times (0, \infty)\).  A similar result is also true if \(p < \frac{3}{2}, n = 2\), and \(b_1 > 0\) or \(p < \frac{7}{5}, n = 3\), and \(b_1\) is sufficiently large for the parabolic-parabolic chemotaxis system.
\item [$\triangleright$] For both $\tau=0$ and $\tau=1$ and for any $n$, if nonlinear Neumann boundary conditions yielding $u_{\nu} - \chi u v_{\nu} = \chi \alpha h_1 \, u v$ (with \(\alpha > 0\)) are considered in \eqref{problemOneSpecies}, then in \cite{BAG2026} global and bounded solutions are obtained as long as $b_1$ and $c_1$ in the gradient-dependent logistic sources are sufficiently large.
\end{itemize}
We observe the behavior of the two total fluxes: one, \(|u|^p\), is subquadratic, while the other, \(uv\), is quadratic. Consistently with the intuitive argument, in order to balance a stronger inward flux, it is necessary to introduce a logistic with stronger damping effects. 

Definitely, and exactly as in the work \cite{BAG2026} (to which we refer the reader for a detailed discussion of the motivations and justification underlying the definition of the flow employed there and adapted here to the two-species setting), the interplay between the quadratic inflow and the dampening gradient term will also play a crucial role in the present study.
\end{remark}
\begin{remark}[Some considerations and challenging open problems]\label{ConsiderationOpenProbl}
We now present some remarks concerning the following model for $\tau_1,\tau_2 \in \{0,1\}$: 
\begin{equation}\label{problemRemark}
\begin{cases}
u_t =\Delta u - \chi \nabla\cdot(u\nabla v)+a_1u-b_1u^{2} -c_1|\nabla u|^{2} & \textrm{in } \Omega \times (0,\TM),\\  
\tau_1 v_t= \Delta v - v+w &\textrm{in } \Omega \times (0,\TM),\\
w_{t} =\Delta w- \xi \nabla\cdot(w\nabla z)+a_2w-b_2w^{2} -c_2|\nabla w|^{2} &\textrm{in } \Omega \times (0,\TM),\\ 
 \tau_2 z_t=\Delta z -z+u &\textrm{in } \Omega \times (0,\TM).\\
\end{cases}
\end{equation}
\begin{itemize}
\item [$\triangleright$] \textsf{On the validity of Theorem \ref{theoremlocalParab}}: The result of Theorem \ref{theoremlocalParab} remains valid in the cases $\tau_1 = 0$ and $\tau_2 = 1$, and vice versa for model \eqref{problemRemark}. (Further details will be given later, exactly in Remark \ref{Details} on page \pageref{page}.)
\item [$\triangleright$] \textsf{The case of the zero-flux, i.e. $\alpha=0$}: We point out that the results obtained in this paper are valid also in the case of zero total flux, i.e. $\alpha=0$ in \eqref{toalFluxesExpression}. Indeed, if we set $\tau_1=\tau_2=0$ in model \eqref{problemRemark}, this is immediately evident by simply inspecting condition \eqref{Estimate_Bag0} in Theorem \ref{theoremlocal}. In the other cases, where at least one of the parameters $\tau_i$ equals $1$, this conclusion cannot be directly inferred from condition \eqref{Estimate_bP}, but it follows instead from adapted  results established in \cite{IshidaLankeitVigliloro-Gradient}.
\item [$\triangleright$] \textsf{Some open problems for model \eqref{problemRemark}}: As the reader may have noticed, in the single-species setting the fully parabolic model \eqref{problem} is known to admit globally bounded solutions under two distinct sets of assumptions; see \cite[Theorem 5.1]{BAG2026}, for $\tau=1$. By contrast, for the fully parabolic two-species system, corresponding to $\tau_1=\tau_2=1$ in \eqref{problemRemark}, the analytical approach developed therein does not seem to carry over to the present setting. The main obstruction appears to arise from the mixed nonlinear terms involving both $u$ and $w$, whose treatment requires substantially finer estimates. On the other hand, although some computations simplify when one of the equations is elliptic, this fundamental difficulty persists in the mixed parabolic--elliptic cases, corresponding to either $\tau_1=0,\ \tau_2=1$ or $\tau_1=1,\ \tau_2=0$ in problem \eqref{problemRemark}. This indicates that the model studied here is not simply a direct extension of the framework developed in \cite{BAG2026}, but instead it presents nontrivial technical challenges. 

In contrast, no such obstruction arises in the doubly elliptic case $\tau_1=\tau_2=0$, corresponding to the model in Theorem \ref{theoremlocal}. Consequently, the two alternative conditions ensuring global boundedness established therein are consistent with those obtained in \cite{BAG2026} for the single-species model.
\end{itemize}
\end{remark}

\begin{remark}[On the role of $v_0$ and $z_0$ in Theorem \ref{theoremlocal}]
For the case $\tau=0$ in model \eqref{problem}, $v$ is uniquely determined by $w$ at each time $t$ (similarly $z$ by $u$). In particular, the initial datum $v_0$ is not prescribed independently, but it is defined as $v_0 := v(\cdot,0)$, where $v_0$ is the solution to
\[
\begin{cases}
\Delta v_0 - v_0 + w_0 = 0 & \text{in } \Omega, \\
\partial_{\nu} v_0 = - h_1 v_0 & \text{on } \partial\Omega,
\end{cases}
\quad \left(\textrm{similarly\;} z_0 := z(\cdot,0) \quad \textrm{is the solution to}\quad 
\begin{cases}
\Delta z_0 - z_0 + u_0 = 0 & \text{in } \Omega, \\
\partial_{\nu} z_0 = - h_2 z_0 & \text{on } \partial\Omega,
\end{cases}\right).
\]
\end{remark} 
\section{Preliminary: elliptic and parabolic regularity, and trace estimates}
We will make use of the following tools, dealing with regularity estimates and trace embedding results.
(Throughout the paper, unless otherwise specified, all constants $d_i$, $i=1, 2, \cdots$ are assumed to be positive.)
\begin{lemma}[Elliptic regularity]\label{RegElliptic}
For some $\delta\in(0,1)$ and $n\in \N$, let $\Omega \subset \R^n$ be a bounded domain of class $C^{2+\delta}$, $p>n$ and $h>0. $ If $0\leq \psi\in C^\delta (\overline\Omega)$, then the solution $\phi\in C^{2+\delta} (\overline\Omega)$ of problem 
\[
\begin{cases}
-\Delta \phi + \phi =\psi &\textrm{in } \Omega,\\
\phi_\nu=-h \phi &\textrm{on } \partial\Omega,
\end{cases}
\]
is nonnegative in $\bar{\Omega}$, and moreover it has the property that for every $q>2$ there is $C_E=C_E(n,\Omega,q,h)>0$ for which $\phi$ is such that 
\begin{equation}\label{EstReg}
\|\phi\|_{W^{1,q}(\Omega)}\leq C_E \|\psi\|_{L^q(\Omega)}.
\end{equation}
In addition, if $\psi\in L^p(\Omega),$ then 
\begin{equation}\label{Gradient}
\phi \in W^{1,\infty}(\Omega).
\end{equation}
\begin{proof}
We refer the reader to \cite[Lemma 6.2]{BAG2026}.
\end{proof}
\end{lemma}
\begin{lemma}[Parabolic regularity]\label{RegParab}
For some $\delta\in(0,1)$ and $n\in \N$, let $\Omega \subset \R^n$ be a bounded domain of class $C^{2+\delta}$, $T\in (0,\infty]$, $p>n$ and $h>0$. 
If $0\leq \psi\in C^{\delta,\frac{\delta}{2}}(\bar{\Omega}\times [0,T])$ and $\phi_0\in C^{2+\delta}(\Omega)$, with $\phi_{0\nu} = -h\phi_0$ on $\partial\Omega$, then the solution  $\phi\in C^{2+\delta,1+\frac{\delta}{2}}(\bar{\Omega}\times [0,T])$ of the problem
\begin{equation*}
\begin{cases}
    \phi_t= \Delta \phi - \phi  +\psi & \text{in } \Omega \times (0,T),\\
     \phi_\nu = -h \phi & \text{on } \partial\Omega \times (0,T),\\
    \phi(\cdot,0)=\phi_0 & \text{on } \Omega,
\end{cases}
\end{equation*}
is nonnegative in $\bar{\Omega}\times [0,T]$, and additionally it has the property that for every $q>1$ there exists $C_P=C_P(n,\Omega,q,h)>0$ for which $\phi$ is such that
\begin{equation}\label{tau1}
\int_0^t e^s \int_\Omega |\Delta \phi(\cdot,s)|^q\,ds \leq C_{P}\left[1+\int_0^t e^s \int_\Omega |\psi(\cdot,s)|^q\,ds\right] \quad \text{for all } t\in(0,T).
\end{equation}
Additionally, if  $\psi\in L^\infty((0,T);L^p(\Omega))$ then
\begin{equation}\label{tau1extension}
\phi \in L^\infty((0,T);W^{1,\infty}(\Omega)).
\end{equation}
\begin{proof}
See \cite[Lemma 6.2]{BAG2026}.
\end{proof}
\end{lemma}
\begin{lemma}[Trace inequality]\label{TraceLemma}
For some $\delta\in(0,1)$ and $n\in \N$, let $\Omega \subset \R^n$ be a bounded domain of class $C^{2+\delta}$. Then there is $C_{\partial \Omega}=
C_{\partial \Omega}(n, \Omega)$ positive such that for all $0\leq \psi\in C^{1}(\bar\Omega)$ and $\mathfrak{p}\in \{1,2\}$ the  following holds:
\begin{equation}\label{TraceInequ}
\int_{\partial \Omega} \psi^{\mathfrak{p}}\leq C_{\partial \Omega}\left(\int_\Omega \psi^{\mathfrak{p}}  +\int_\Omega |\nabla\psi|^{\mathfrak{p}}\right).
\end{equation} 
\begin{proof}
The proof can be found in \cite[page 315]{BrezisBook} or \cite[Theorem 1, page 258]{Evans-2010-PDEs}.
\end{proof}
\end{lemma}
\section{Local existence, dichotomy and boundedness criterion through Hölder continuity} 
The first step is to guarantee the existence of a solution, at least for short times, to problem \eqref{problem}, under investigation. Although the proof of this result is usually considered standard in the literature, we choose to provide it here as well, in order to make the paper complete and self-contained.
\begin{lemma}[Local existence] \label{LocalExistenceLemma}
For some $\delta\in(0,1)$ and $n\in \N$, let $\Omega \subset \R^n$ be a bounded domain of class $C^{2+\delta}$, $\tau \in \{0,1\}$, $\chi, \xi, a_1, a_2, b_1, b_2, c_1, c_2, h_1, h_2 > 0$ and $\alpha \in [0,1]$. Then for every $u_0, \tau v_0, w_0, \tau z_0: \bar{\Omega}  \rightarrow \R^+$, with $u_0,  \tau v_0, w_0,  \tau z_0 \in C^{2+\delta}(\bar\Omega)$ complying with 
\begin{equation*}
u_{0\nu}=(\alpha-1)\chi h_1 u_0 v_0,  \quad \tau v_{0\nu}= - h_1 \tau v_0, \quad w_{0\nu}=(\alpha-1)\xi h_2 w_0 z_0 \quad \textrm{and} \quad  \tau z_{0\nu}=- h_2 \tau z_0 \;  \textrm{ on }\partial \Omega
\end{equation*}
there exist $\TM\in (0,\infty]$ and a unique quadruple of functions $(u,v, w, z)$, with 
\begin{equation*}
(u,v, w, z)\in C^{2+\delta,1+\frac{\delta}{2}}( \Bar{\Omega} \times [0, \TM))\times C^{2+\delta, \tau + \frac{\delta}{2}}( \Bar{\Omega} \times [0, \TM)) \times C^{2+\delta,1+\frac{\delta}{2}}( \Bar{\Omega} \times [0, \TM))\times C^{2+\delta, \tau+\frac{\delta}{2}}( \Bar{\Omega} \times [0, \TM)),
\end{equation*}
solving problem \eqref{problem} and nonnegative in $\bar{\Omega}\times [0,\TM)$. Additionally,
 \begin{equation}\label{dictomyCriteC2+del} 
 \text{if} \quad \TM<\infty \quad \text{then} \quad \limsup_{t \to \TM} \left(\|u(\cdot,t)\|_{C^{2+\delta}(\bar\Omega)}+\|v(\cdot,t)\|_{C^{2+\delta}(\bar\Omega)}+\|w(\cdot,t)\|_{C^{2+\delta}(\bar\Omega)}+\|z(\cdot,t)\|_{C^{2+\delta}(\bar\Omega)}\right)=\infty.
 \end{equation}
\begin{proof}
Let us begin with the question of existence. For any $R > 0$, and for some $0 < T \leq 1$ which will be be specified below, we consider the closed, bounded, and convex subset of $C^{\delta,\frac{\delta}{2}}(\bar{\Omega}\times [0,T])$
$$S_{T}=\{(u, w) \in (C^{\delta,\frac{\delta}{2}}(\bar{\Omega}\times [0,T]))^2: u, w \geq 0,  \, \|u(\cdot,t)-u_0\|_{C^{\delta}(\bar{\Omega})}\le R,\, 
\|w(\cdot,t)-w_0\|_{C^{\delta}(\bar{\Omega})}\le R,\, \; \text{for\ all}\
t\in[0,T]\}.$$
Given an element $(\tilde u,\tilde w)\in S_T$, Lemma \ref{RegElliptic} and Lemma \ref{RegParab}
guarantee the existence of solutions $v$ and $z$ to problems
\begin{equation*}\label{2.2}
\begin{cases}
\tau v_t -\Delta v+ v=\tilde{w}&\text{in}\ \Omega\times(0,T),\\
v_\nu=-h_1 v&\text{on}\ \partial\Omega\times(0,T),\\
\tau v(x,0) = \tau v_0(x) & x \in \bar{\Omega}
\end{cases}
\end{equation*}
and
\begin{equation*}\label{2.2z}
\begin{cases}
\tau z_t -\Delta z+ z=\tilde{u}&\text{in}\ \Omega\times(0,T),\\
z_\nu=-h_2 z&\text{on}\ \partial\Omega\times(0,T),\\
\tau z(x,0) = \tau z_0(x) & x \in \bar{\Omega}
\end{cases}
\end{equation*}
respectively.
By $\tau v_0, \tau z_0 \in C_\nu^{2+\delta}(\bar{\Omega})$ and  elliptic and parabolic regularity results for oblique derivative problems (\cite[Theorem 6.30]{GilbarTrudinger} for $\tau=0$, and \cite[Corollary 5.1.22]{LunardiBook}, \cite[Theorem IV.5.3]{LSUBookInequality} for $\tau=1$),  we have that 
\begin{equation*}
v, z \in C^{2+\delta, \tau+\frac{\delta}{2}}(\bar\Omega \times [0,T])
\quad \textrm{and}\quad \sup_{t\in [0,T]} \left(\lVert v(\cdot, t)\rVert_{C^{2+\delta}(\bar\Omega)} + \lVert z(\cdot, t)\rVert_{C^{2+\delta}(\bar\Omega)}\right) 
\leq H
\end{equation*}
with $H=H(R)>0$. In light of the properties of $v, \nabla v$ and $z, \nabla z$ on $\bar{\Omega}\times [0,T]$, problems
\begin{equation}\label{2.1}
	\begin{cases}
		u_t=\Delta u -\chi  \nabla u  \cdot \nabla v-\chi u \Delta v +  a_1 u-b_1 u^2 - c_1|\nabla u|^2
		&\text{in}\ \Omega\times(0,T),\\
		u_\nu=(\alpha-1)h_1 \chi u v&\text{on}\ \partial\Omega\times(0,T),\\
		u(x,0)=u_0(x)&x\in\overline{\Omega},
	\end{cases}
\end{equation}
and
\begin{equation}\label{2.1w}
	\begin{cases}
		w_t=\Delta w -\xi  \nabla w  \cdot \nabla z-\xi w \Delta z +  a_2 w-b_2 w^2 - c_2|\nabla w|^2
		&\text{in}\ \Omega\times(0,T),\\
		w_\nu=(\alpha-1)h_2 \xi w z&\text{on}\ \partial\Omega\times(0,T),\\
		w(x,0)=w_0(x)&x\in\overline{\Omega},
	\end{cases}
\end{equation}
can be rewritten as  general systems of the form
\begin{equation*}
\begin{cases}
u_t=\mathcal{A}u +\varphi(x,t,u,\nabla u) &\text{in}\ \Omega\times(0,T),\\
\mathcal{B}_1 u=u_\nu-(\alpha-1)h_1 \chi u v=0&\text{on}\ \partial\Omega\times(0,T),\\
u(x,0)=u_0(x)&x\in\overline{\Omega}
\end{cases}
\qquad \textrm{and} \qquad 
\begin{cases}
w_t=\mathcal{A}w +\varphi(x,t,w,\nabla w) &\text{in}\ \Omega\times(0,T),\\
\mathcal{B}_1 w=w_\nu-(\alpha-1)h_2 \xi w z=0&\text{on}\ \partial\Omega\times(0,T),\\
w(x,0)=w_0(x)&x\in\overline{\Omega}.
\end{cases}
\end{equation*}
Since the above problems satisfy \cite[(7.3.5)]{LunardiBook}, by recalling that $u_0, w_0 \in C^{2+\delta}_\nu(\bar\Omega)$,  \cite[Proposition 7.3.3]{LunardiBook} and \cite[Theorem V 6.1]{LSUBookInequality} (see also \cite[Theorem 8.5.4]{LunardiBook}) ensure the existence of some $0<T< 1$, as above, such that problems \eqref{2.1}
and \eqref{2.1w} have a unique solution 
\begin{equation*}
u, w \in C^{2+\delta,1+\frac{\delta}{2}}(\bar\Omega\times [0,T]).
\end{equation*} 
Moreover, there exists $K_1=K_1(R)>0$ such that 
\[
\|u\|_{C^{\delta,\frac{\delta}{2}}(\bar\Omega\times [0,T])} + \|w\|_{C^{\delta,\frac{\delta}{2}}(\bar\Omega\times [0,T])} \le K_1(R),
\]
and therefore for proper $K_2, K>0$ we have
\[
\|u(\cdot,t)-u_0\|_{C^\delta(\bar\Omega)}
+
\|w(\cdot,t)-w_0\|_{C^\delta(\bar\Omega)} \leq K_2 T^{\frac{\delta}{2}}(\|u\|_{C^{\delta,\frac{\delta}{2}}(\bar\Omega\times [0,T])} + \|w\|_{C^{\delta,\frac{\delta}{2}}(\bar\Omega\times [0,T])} ) \le K T^{\frac{\delta}{2}} \quad \textrm{for all } t \in [0,T].
\]
Choosing
\[
T\le\left(\frac{R}{K}\right)^{\frac{2}{\delta}},
\]
we derive 
\[
\|u(\cdot,t)-u_0\|_{C^\delta(\bar\Omega)}
+
\|w(\cdot,t)-w_0\|_{C^\delta(\bar\Omega)}
\le R \quad \textrm{for all } t \in [0,T].
\]
Furthermore, since $0$ is a subsolution of problems \eqref{2.1} and \eqref{2.1w}, the nonnegativity of $u$ and $w$ on $\Omega \times (0,T)$ follows from the parabolic comparison principle (see \cite[Theorem 8.2]{CrandallEtAl_User1992} supported by \cite[Theorem 1]{Friedman1958}), while the nonnegativity of $v$ and $z$ is a direct consequence of the elliptic and parabolic maximum principle.
Henceforth, the above reasoning implies that $\Phi(S_T)\subset S_T$, where $\Phi(\tilde u,\tilde w)=(u,w)$ with $u$ and $w$ solutions of problems \eqref{2.1} and 
\eqref{2.1w}. 
Moreover, $\Phi$ is bounded in $C^{2+\delta,1+\frac{\delta}{2}}(\bar{\Omega} \times [0,T])$, and compact in $C^{\delta,\frac{\delta}{2}}(\bar{\Omega} \times [0,T])$, as the \cite[Ascoli--Arzel\`a Theorem 4.25]{BrezisBook} ensure the natural embedding of $C^{2+\delta,\tau+\frac{\delta}{2}}(\bar{\Omega}\times [0,T]))$ into $C^{\delta,\frac{\delta}{2}}(\bar{\Omega}\times [0,T]))$. Furthermore, using the continuous dependence of the solutions defining $\Phi$ on the initial data, we deduce its continuity. Therefore, Schauder's fixed point theorem establishes the existence of a fixed point $(u,w) \in S_T$ for $\Phi$. 

Concerning uniqueness, the result can be obtained by adapting the proof in \cite[Lemma 7.1]{BAG2026}; we shall here present only the steps that are different and of primary significance. Let $(u_1,v_1, w_1, z_1)$ and $(u_2,v_2, w_2, z_2)$ be two different nonnegative classical solutions of problem \eqref{problem} in $\Omega\times (0,T)$ with the same initial data $u_1(\cdot,0)=u_2(\cdot,0)=u_0(x)$, $\tau v_1(\cdot,0)=\tau v_2(\cdot,0)= \tau v_0(x)$, $w_1(\cdot,0)=w_2(\cdot,0)=w_0(x)$
and $\tau z_1(\cdot,0)= \tau z_2(\cdot,0)=\tau z_0(x)$. 
We restrict our analysis to the case $\tau=0$ (the case $\tau=1$ being analogous, as will be discussed later). For $i=1,2$ let us consider such problems:
\begin{equation}\label{3.2_Bis}
\begin{cases}
-\Delta v_i+ v_i=w_i&\text{in}\ \Omega\times(0,T),\\
(v_i)_\nu=-h_1 v_i&\text{on}\ \partial\Omega\times(0,T),
\end{cases}
\end{equation}
\begin{equation}\label{3.1_BisA}
\begin{cases}
(u_i)_t=\Delta u_i-\chi \nabla \cdot (u_i \nabla v_i)+a_1 u_i-b_1 u_i^2-c_1 |\nabla u_i|^2 
 & \text{in}\ \Omega\times(0,T),\\
(u_i)_\nu=(\alpha-1)\chi h_1 u_i v_i&\text{on}\ \partial\Omega\times(0,T),\\
u_i(x,0)=u_0(x)&x\in\overline{\Omega},
\end{cases}
\end{equation}
\begin{equation}\label{3.2_Bis_z}
\begin{cases}
-\Delta z_i+ z_i=u_i&\text{in}\ \Omega\times(0,T),\\
(z_i)_\nu=-h_2 z_i&\text{on}\ \partial\Omega\times(0,T),
\end{cases}
\end{equation}
and
\begin{equation}\label{3.1_BisA_w}
\begin{cases}
(w_i)_t=\Delta w_i-\xi \nabla \cdot (w_i \nabla z_i)+a_2 w_i-b_2 w_i^2-c_2 |\nabla w_i|^2 
 & \text{in}\ \Omega\times(0,T),\\
(w_i)_\nu=(\alpha-1)\xi h_2 w_i z_i&\text{on}\ \partial\Omega\times(0,T),\\
w_i(x,0)=w_0(x)&x\in\overline{\Omega}.
\end{cases}
\end{equation}
Owing to the $C^2$-regularity of $u_1, u_2, v_1, v_2, w_1, w_2$ and $z_1, z_2$, we may define
\begin{equation}\label{ConstantsC1-2-3} 
\begin{split}
	C_1=
		 & \max\{\|u_1\|_{L^\infty(\Omega\times(0,T))},
	\|u_2\|_{L^\infty(\Omega\times(0,T))}, \lVert v_1\rVert_{L^\infty(\Omega \times (0,T))}, \lVert v_2\rVert_{L^\infty(\Omega \times (0,T))}, \\ & \quad \quad  \lVert \nabla u_1\rVert_{L^\infty(\Omega \times (0,T))},\lVert \nabla u_2\rVert_{L^\infty(\Omega \times (0,T))}, \lVert \nabla v_1\rVert_{L^\infty(\Omega \times (0,T))}\}
	\end{split}
\end{equation}
and 
\begin{equation}\label{ConstantsC1-2-3w} 
\begin{split}
	C_2=
		 & \max\{\|w_1\|_{L^\infty(\Omega\times(0,T))},
	\|w_2\|_{L^\infty(\Omega\times(0,T))}, \lVert z_1\rVert_{L^\infty(\Omega \times (0,T))}, \lVert z_2\rVert_{L^\infty(\Omega \times (0,T))}, \\ & \quad \quad  \lVert \nabla w_1\rVert_{L^\infty(\Omega \times (0,T))},\lVert \nabla w_2\rVert_{L^\infty(\Omega \times (0,T))}, \lVert \nabla z_1\rVert_{L^\infty(\Omega \times (0,T))}\}.
	\end{split}
\end{equation}
Thereupon, the Mean Value Theorem yields
\begin{equation}\label{MeanValueTheroem}  
|u_1^2-u_2^2| \leq 2 C_1|u_1-u_2| \quad \textrm{ and } \quad ||\nabla u_1|^2-|\nabla u_2|^2|\leq 2 C_1||\nabla u_1|-|\nabla u_2|| \quad \text{ in}\ \Omega\times(0,T)
\end{equation} 
and
\begin{equation}\label{MeanValueTheroemw}  
|w_1^2-w_2^2| \leq 2 C_2|w_1-w_2| \quad \textrm{ and } \quad ||\nabla w_1|^2-|\nabla w_2|^2|\leq 2 C_2||\nabla w_1|-|\nabla w_2|| \quad \text{ in}\ \Omega\times(0,T).
\end{equation} 
Within this framework, examining problems \eqref{3.2_Bis} and  \eqref{3.2_Bis_z}, it follows that $V:=v_1-v_2$ and $Z:=z_1-z_2$ satisfy respectively
\begin{equation}\label{ProblElliptic}
\begin{cases}
-\Delta V+V=W:=w_1-w_2&\text{in}\ \Omega\times(0,T),\\
V_\nu=-h_1 V & \text{on}\ \partial\Omega\times(0,T)
\end{cases}
\end{equation}
and 
\begin{equation}\label{ProblElliptic_Z}
\begin{cases}
-\Delta Z+Z=U:=u_1-u_2&\text{in}\ \Omega\times(0,T),\\
Z_\nu=-h_2 Z & \text{on}\ \partial\Omega\times(0,T).
\end{cases}
\end{equation}
Accordingly, by Young's inequality, testing equation \eqref{ProblElliptic} with $V$ and equation \eqref{ProblElliptic_Z} with $Z$ gives the estimates
\begin{equation}\label{3.4}
\int_\Omega|\nabla V|^2+\int_\Omega V^2\le \const{Giu-2Bis}\int_\Omega W^2\quad \textrm{for all }\, t\in(0,T)
\end{equation}
and
\begin{equation}\label{3.4_Z}
\int_\Omega|\nabla Z|^2+\int_\Omega Z^2\le \const{Giu-2Biss}\int_\Omega U^2\quad \textrm{for all }\, t\in(0,T).
\end{equation}
Conversely, from \eqref{3.1_BisA} and \eqref{3.1_BisA_w} some straightforward computations infer that $U=u_1-u_2$ and $W=w_1-w_2$ fulfill 
\begin{equation}\label{3.1_Bis}
	\begin{cases}
  U_t=\Delta U
		-\chi \nabla \cdot (U \nabla v_1+u_2\nabla V)
		+a_1 U -b_1 (u_1^2-u_2^2)-c_1(|\nabla u_1|^2-|\nabla u_2|^2) 
 & \text{in}\ \Omega\times(0,T),\\
U_\nu=(\alpha-1)\chi h_1 \left( u_1 v_1-u_2v_2\right)&\text{on}\ \partial\Omega\times(0,T),
  \\
U(x,0)=0&x\in\overline{\Omega}
\end{cases}
\end{equation}
and 
\begin{equation}\label{3.1_Bis_W}
\begin{cases}
  W_t=\Delta W
		-\xi \nabla \cdot (W \nabla z_1+w_2\nabla Z)
		+a_2 W -b_2 (w_1^2-w_2^2)-c_2(|\nabla w_1|^2-|\nabla w_2|^2) 
 & \text{in}\ \Omega\times(0,T),\\
W_\nu=(\alpha-1)\xi h_2\left( w_1 z_1-w_2 z_2\right)&\text{on}\ \partial\Omega\times(0,T),
  \\
W(x,0)=0&x\in\overline{\Omega}.
\end{cases}
\end{equation}
In this manner, by exploiting \eqref{3.2_Bis}--\eqref{ProblElliptic_Z} and the inequalities 
\[
|\nabla u_1|-|\nabla u_2|\leq |\nabla (u_1-u_2)|=|\nabla U| \textrm{ and } |\nabla w_1|-|\nabla w_2|\leq |\nabla (w_1-w_2)|=|\nabla W|,\] problems \eqref{3.1_Bis} and \eqref{3.1_Bis_W} yield for all $t\in (0,T)$
\begin{equation}\label{MainForUniqueness}  
\begin{split}
\frac{1}{2}\frac{d}{dt}\int_\Omega U^2+\int_\Omega |\nabla U|^2&\leq C_3\int_{\partial \Omega}U^2+C_4 \int_{\partial \Omega}|U  V| +C_5\int_\Omega |U \nabla U| +C_6\int_\Omega |\nabla U \cdot \nabla V| +C_7\int_\Omega  U^2
	\end{split}
\end{equation}
and
\begin{equation}\label{MainForUniquenessW}  
\begin{split}
\frac{1}{2}\frac{d}{dt}\int_\Omega W^2+\int_\Omega |\nabla W|^2&\leq C_8\int_{\partial \Omega}W^2+C_9 \int_{\partial \Omega}|W  Z| +C_{10} \int_\Omega |W \nabla W| +C_{11} \int_\Omega |\nabla W \cdot \nabla Z| +C_{12} \int_\Omega W^2.
	\end{split}
\end{equation}
Now, in order to control the integrals at the right hand side of \eqref{MainForUniqueness} and \eqref{MainForUniquenessW}, we apply the trace
inequality in \eqref{TraceInequ} (with $\mathfrak{p}=2$) in conjunction with Young's inequality, this yields  
\begin{equation}\label{IneqA-A}
C_3\int_{\partial \Omega}U^2+C_4 \int_{\partial \Omega}|U  V| +C_5\int_\Omega |U \nabla U| +C_6\int_\Omega |\nabla U \cdot \nabla V| \leq C_{13} \int_\Omega U^2 + \int_\Omega |\nabla U|^2 + C_{14} \int_\Omega V^2+ C_{15} \int_\Omega |\nabla V|^2
\end{equation}
and
\begin{equation}\label{IneqA-AW}
C_8\int_{\partial \Omega}W^2+C_9 \int_{\partial \Omega}|W  Z| +C_{10} \int_\Omega |W \nabla W| +C_{11} \int_\Omega |\nabla W \cdot \nabla Z| \leq C_{16} \int_\Omega W^2 + \int_\Omega |\nabla W|^2 + C_{17} \int_\Omega Z^2+ C_{18} \int_\Omega |\nabla Z|^2.
\end{equation}
Lastly, by adding \eqref{MainForUniqueness} and \eqref{MainForUniquenessW} and inserting \eqref{IneqA-A} and \eqref{IneqA-AW} in such expression and invoking relations \eqref{3.4} and \eqref{3.4_Z}, we deduce for a proper $\tilde{C}=\tilde{C}(T,C_{\partial \Omega})>0$ that 
\[	
\frac{d}{dt}\int_\Omega (U^2 + W^2) \le\tilde{C}\int_\Omega (U^2+W^2),\qquad t\in(0,T).
\]
By recalling that $U(x,0)=W(x,0)=0$ in $\bar{\Omega}$, we derive from the initial condition $\int_\Omega (U^2(x,0)+W^2(x,0))\,dx=0$ that $U=u_1-u_2=0$ and $W=w_1-w_2=0$ on $\Omega \times (0,T)$ and  as a consequence, from problems \eqref{3.2_Bis} and \eqref{3.2_Bis_z}, we obtain $v_1-v_2=0$ and $z_1-z_2=0$ on $\Omega \times (0,T)$; achieving the claim. (For $\tau = 1$, the proof follows a similar line of argument, but the reasoning relies on the evolutive behavior of $\frac{1}{2} \int_\Omega (U^2+V^2+W^2+Z^2)$.)

Standard prolongation techniques ensure the existence of a maximal time of existence
$\TM \in (0,\infty]$, in the sense that either $\TM=\infty$, or if $\TM<\infty$ no solution belonging to $C^{2+\delta,\tau+\frac{\delta}{2}}(\bar{\Omega}\times [0,\TM])$ may exist and henceforth relation \eqref{dictomyCriteC2+del} has to be fulfilled. For more details, we refer to \cite[Lemma 7.1]{BAG2026}.
\end{proof}
\end{lemma}
From this point onward, $(u, v, w, z)$ denotes the unique local solution of model \eqref{problem} on $\Omega \times (0,\TM)$ as established in Lemma \ref{LocalExistenceLemma}, for which relation \eqref{dictomyCriteC2+del} holds; in this case, the solution blows up at 
$\TM$ in the $C^{2+\delta}(\bar{\Omega})$-norm. It is worth emphasizing that the blow-up of the solution in the $C^{2+\delta}$ norm does not, in principle, imply that the solution itself becomes unbounded. However, the extension criterion stated below (which we will exploit with $\psi=u$, $f=-\chi \nabla v$, $a=a_1$, $b=b_1$, $c=c_1$ and with $\psi=w$, $f=-\xi \nabla z$, $a=a_2$, $b=b_2$, $c=c_2$) rules out this possibility. In particular, due to the quadratic growth of the logistic gradient term, any bounded solution is actually Hölder continuous. 
\begin{lemma}[Extension and boundedness criteria]\label{LemmaMoserType}
For some $\delta\in(0,1)$ and $n\in \N$, let $\Omega \subset \R^n$ be a bounded domain of class $C^{2+\delta}$, $\TM\in (0,\infty]$, $a,b,c > 0$, $f=f(x,t) \in \left(C^1(\bar\Omega\times (0,\TM))\cap L^\infty((0,\TM);L^\infty(\Omega))\right)^n$, with $f\cdot \nu\geq 0$ for all $(x,t)\in \partial \Omega \times (0,\TM)$, and 
$0\leq g=g(x,t) \in  C^0(\bar\Omega\times [0,\TM))$. If $0 \leq \psi\in C^{2+\delta,1+\frac{\delta}{2}}(\bar\Omega \times [0,\TM))\cap L^\infty((0,\TM);L^1(\Omega))$ solves
\begin{equation*}
\begin{cases}
\psi_t= \Delta \psi +\nabla \cdot (f \psi)+a\psi-b\psi^2-c|\nabla \psi|^2 & \textrm{in }\; \Omega \times (0,T)\\
\psi_\nu +g\psi= 0 & \textrm{on }\; \partial \Omega \times (0,\TM),\\
\end{cases}
\end{equation*}
and satisfies $\psi_\nu(x,0)+g(x,0)\psi(x,0)=0$ for all $x\in \partial \Omega$; then we have $\TM=\infty$ and in particular $\psi \in L^\infty((0,\infty);L^\infty(\Omega))$.
\begin{proof}
This result is consequence of \cite[Lemma 6.5]{BAG2026} which gives that $\psi\in L^\infty((0,\TM);C^{1+\delta}(\bar{\Omega}))$ and, in turn \cite[Lemma 7.2]{BAG2026} gives the claim invoking regularity theories.
\end{proof}
\end{lemma}
\begin{remark}\label{RemarkApplicabilityExtension}
We observe that, in essence, the extension argument discussed above only requires uniform-in-time bounds for the $L^1$-norm of $\psi$ and the $L^\infty$-norm of $f$. In our setting, $f=-\chi\nabla v$, and its regularity depends on that of $u$ through the equation satisfied by $v$ (an analogous argument applies to $w$ and $z$). In particular, in order to guarantee the uniform boundedness of $f$, it is sufficient to establish the uniform boundedness of $u$ in $L^p$, for $p$ sufficiently large.
\end{remark}
\section{A priori estimates}
Accordingly to Remark \ref{RemarkApplicabilityExtension}, 
we here concentrate on deriving a priori integral bounds of $u$ and $w$ in some $L^p(\Omega))$ for large $p>1$, whose proof relies essentially on the uniform-in-time boundedness of $\int_\Omega u$ and $\int_\Omega w$ on $(0,\TM)$.
\subsection{Temporal uniform bound on the mass for $\tau=0$}
We will study the behavior of the functional $\int_\Omega (u+w)$, establishing that it remains bounded for all $t \in (0,\TM)$.
\begin{lemma}\label{BoundednessMass}
Let $\chi, \xi, a_1, a_2, c_1, c_2, h_1, h_2 >0$, $\alpha \in [0,1]$, $C_{\partial \Omega}$ be the trace constant in Lemma \ref{TraceLemma} and either 
\begin{equation}\label{Estimate_bLemma}
b_1> c_1 + \frac{(\xi \alpha)^2 h_2 C_{\partial \Omega}}{16 c_2}, \quad   
b_2 > c_2 + \frac{(\chi \alpha)^2 h_1 C_{\partial \Omega}}{16 c_1} \quad (\textrm{being } \alpha >0),  
\end{equation}
or 
\begin{equation}\label{BagTao0Lem}
\alpha \max\{\chi,\xi\} <\min\left\{4c_1, 4 c_2, \frac{2 b_1}{3},\frac{2 b_2}{3}\right\}.
\end{equation}
Then $u, w\in L^\infty((0,\TM);L^1(\Omega))$, in the sense that there exist $m_0, m_1 >0$ such that 
\begin{equation*}
\int_\Omega u \leq m_0 \quad \textrm{and} \quad \int_\Omega w \leq m_1 \quad \textrm{for all } t \in (0, \TM).
\end{equation*}
\begin{proof}
We begin by integrating the first equation of problem \eqref{problem} over the domain $\Omega$. Upon applying the divergence theorem, taking into account the Robin boundary conditions, and employing Young's inequality, we derive the following estimate 
\begin{equation}\label{estimate}
\begin{split}
\frac{d}{dt} \int_\Omega u &= \chi \alpha h_1 \int_{\partial \Omega} uv + a_1 \int_\Omega u - b_1 \int_\Omega u^2 - c_1 \int_\Omega  |\nabla u|^2\\
& \leq \frac{c_1}{C_{\partial \Omega}} \int_{\partial \Omega} u^2 
+ \frac{(\chi \alpha h_1)^2 C_{\partial \Omega}}{4 c_1}\int_{\partial \Omega} v^2 + a_1 \int_\Omega u - b_1 \int_\Omega u^2 - c_1 \int_\Omega  |\nabla u|^2 \quad \textrm{on } (0,\TM).
\end{split}
\end{equation}
On the other hand, testing the second equation in \eqref{problem} with $\frac{(\chi \alpha)^2 h_1 C_{\partial \Omega}}{4 c_1} v$, integrating over $\Omega$, and invoking Young's inequality (with $\epsilon_1>0$ specified later), yields
\begin{equation*}
\begin{split}
&\frac{(\chi \alpha)^2 h_1 C_{\partial \Omega}}{4 c_1}  \int_\Omega |\nabla v|^2 + \frac{(\chi \alpha h_1)^2 C_{\partial \Omega}}{4 c_1} \int_{\partial \Omega} v^2 + \frac{(\chi \alpha)^2 h_1 C_{\partial \Omega}}{4 c_1}   \int_\Omega v^2
= \frac{(\chi \alpha)^2 h_1 C_{\partial \Omega}}{4 c_1}  \int_\Omega wv \\
&\leq \epsilon_1 \int_\Omega w^2 + \left(\frac{(\chi \alpha)^2 h_1 C_{\partial \Omega}}{4 c_1}\right)^2\frac{1}{4 \epsilon_1}  \int_\Omega v^2 \quad \textrm{for all } t \in (0, T_{\max}),
\end{split}
\end{equation*}
which implies
\begin{equation*}
\frac{(\chi \alpha h_1)^2 C_{\partial \Omega}}{4 c_1}  \int_{\partial \Omega} v^2 \leq \epsilon_1 \int_\Omega w^2 - \frac{(\chi \alpha)^2 h_1 C_{\partial \Omega}}{4 c_1}
\left(1-\frac{(\chi \alpha)^2 h_1 C_{\partial \Omega}}{16 c_1 \epsilon_1}\right) \int_\Omega v^2 \quad \textrm{on } (0, T_{\max}).
\end{equation*}
Substituting the foregoing estimate into \eqref{estimate}, and applying the trace inequality to the term $\frac{c_1}{C_{\partial \Omega}} \int_{\partial \Omega} u^2$
as stated in Lemma \ref{TraceLemma} (with $\psi = u$, $\mathfrak{p} = 2$), we infer that
\begin{equation}\label{estimateMu}
\begin{split}
\frac{d}{dt} \int_\Omega u &\leq  a_1 \int_\Omega u + \left(c_1- b_1 \right) \int_\Omega u^2 + \epsilon_1 \int_\Omega w^2 - \frac{(\chi \alpha)^2 h_1 C_{\partial \Omega}}{4 c_1}\left(1-\frac{(\chi \alpha)^2 h_1 C_{\partial \Omega}}{16 c_1 \epsilon_1}\right) \int_\Omega v^2 \quad \textrm{for all } t \in (0,\TM).  
\end{split}
\end{equation}
Proceeding analogously with the third equation of problem \eqref{problem}, one arrives at the following estimate (with $\epsilon_2>0$ chosen later)
\begin{equation}\label{estimateMw}
\begin{split}
\frac{d}{dt} \int_\Omega w &\leq  a_2 \int_\Omega w + (c_2- b_2) \int_\Omega w^2 + \epsilon_2 \int_\Omega u^2 - \frac{(\xi \alpha)^2 h_2 C_{\partial \Omega}}{4 c_2}\left(1-\frac{(\xi \alpha)^2 h_2 C_{\partial \Omega}}{16 c_2 \epsilon_2}\right) \int_\Omega z^2 \quad \textrm{on } (0,\TM).  
\end{split}
\end{equation}
By summing bounds \eqref{estimateMu} and \eqref{estimateMw}, we obtain 
\begin{equation*}\label{sumMuw}
\begin{split}
&\frac{d}{dt} \left(\int_\Omega u + \int_\Omega w \right) \leq  a_1 \int_\Omega u + 
a_2 \int_\Omega w + (c_1+ \epsilon_2 - b_1) \int_\Omega u^2 + (c_2 + \epsilon_1- b_2)
\int_\Omega w^2 \\
&- \frac{(\chi \alpha)^2 h_1 C_{\partial \Omega}}{4 c_1}\left(1-\frac{(\chi \alpha)^2 h_1 C_{\partial \Omega}}{16 c_1 \epsilon_1}\right) \int_\Omega v^2
- \frac{(\xi \alpha)^2 h_2 C_{\partial \Omega}}{4 c_2}\left(1-\frac{(\xi \alpha)^2 h_2 C_{\partial \Omega}}{16 c_2 \epsilon_2}\right) \int_\Omega z^2 \quad \textrm{for all } t \in (0,\TM).
\end{split}
\end{equation*}
In light of assumption \eqref{Estimate_bLemma}, one can select $\epsilon_1$ and $\epsilon_2$ such that 
\begin{equation*}
\frac{(\chi \alpha)^2 h_1 C_{\partial \Omega}}{16 c_1} < \epsilon_1 < b_2-c_2
\quad \textrm{and} \quad
\frac{(\xi \alpha)^2 h_2 C_{\partial \Omega}}{16 c_2} < \epsilon_2 < b_1-c_1.
\end{equation*}
Consequently, a double application of Young's inequality gives constants $\const{yy1}$, 
$\const{yy2}$ and $\const{yy3}$ such that
\[
\frac{d}{dt} \left(\int_\Omega u + \int_\Omega w \right)  \leq -\const{yy1} \int_{\Omega} u 
-\const{yy2} \int_{\Omega} w + \const{yy3} \quad \text{on} \quad (0, T_{\max}),
\]
which, together with $\int_{\Omega} u(x,0)\,dx=\int_{\Omega} u_0(x)\,dx$ and $\int_{\Omega} w(x,0)\,dx=\int_{\Omega} w_0(x)\,dx$, leads to the statement.

On the other hand, the same objective can be attained through an alternative line of reasoning, namely by exploiting \eqref{BagTao0Lem}.
In particular, upon adding the term $\int_\Omega u$ to both sides of \eqref{estimate}, we arrive at
\begin{equation}\label{Bag00}
\frac{d}{dt} \int_\Omega u  + \int_\Omega u  = -c_1 \int_\Omega |\nabla u|^2 - b_1 \int_\Omega u^2  + (a_1 + 1) \int_\Omega u  + \chi \alpha h_1 \int_{\partial \Omega} uv \quad \textrm{on } (0,T_{\max}).
\end{equation}
Subsequently, for $\alpha \max\{\chi,\xi\} \leq \beta<\min\left\{4c_1, 4 c_2, \frac{2 b_1}{3},\frac{2 b_2}{3}\right\}$, we test the second equation in problem \eqref{problem} with $\beta (u + v)$ and integrate over $\Omega$; this yields 
\begin{equation}\label{Bag01}
\begin{split}
0 = \int_\Omega \beta (u + v) \left( \Delta v + w - v \right) =& -\beta \int_\Omega (\nabla u \cdot \nabla v)  - \beta \int_\Omega |\nabla v|^2 - \beta h_1 \int_{\partial \Omega} (uv + v^2)\\
& - \beta  \int_\Omega v^2 - \beta  \int_\Omega uv + \beta  \int_\Omega uw + \beta  \int_\Omega vw \quad \textrm{on } (0,T_{\max}).
\end{split}
\end{equation}
Combining \eqref{Bag00} and \eqref{Bag01} and neglecting non positive terms and
also in view of the constraint imposed on $\beta$ lead to
\begin{equation*}
\begin{split}
\frac{d}{dt} \int_\Omega u  + \int_\Omega u \leq & -c_1 \int_\Omega |\nabla u|^2  - \beta \int_\Omega (\nabla u \cdot \nabla v) - \beta \int_\Omega |\nabla v|^2 
+ (a_1 + 1) \int_\Omega u  + \beta  \int_\Omega uw \\
&+ \beta  \int_\Omega vw - b_1 \int_\Omega u^2 - \beta \int_\Omega v^2 \quad \textrm{for all }  t \in (0,T_{\max}).
\end{split}
\end{equation*}
Similarly to what we have done above, we infer
\begin{equation*}
\begin{split}
\frac{d}{dt} \int_\Omega w + \int_\Omega w \leq & -c_2 \int_\Omega |\nabla w|^2  - \beta \int_\Omega (\nabla w \cdot \nabla z) - \beta \int_\Omega |\nabla z|^2 
+ (a_2 + 1) \int_\Omega w  + \beta  \int_\Omega uw \\
&+ \beta  \int_\Omega uz - b_2 \int_\Omega w^2 - \beta \int_\Omega z^2 \quad \textrm{on } (0,T_{\max}).
\end{split}
\end{equation*}
By adding the previous inequalities, we derive for all $t \in (0,\TM)$
\begin{equation}\label{SumB}
\begin{split}
&\frac{d}{dt} \left(\int_\Omega u + \int_\Omega w\right) + \int_\Omega u  + \int_\Omega w \leq -c_1 \int_\Omega |\nabla u|^2  - \beta \int_\Omega (\nabla u \cdot \nabla v) 
- \beta \int_\Omega |\nabla v|^2  \\
&+ (a_1 + 1) \int_\Omega u  + 2 \beta  \int_\Omega uw + \beta  \int_\Omega vw - b_1 \int_\Omega u^2 - \beta \int_\Omega v^2 
 -c_2 \int_\Omega |\nabla w|^2\\
   &- \beta \int_\Omega (\nabla w \cdot \nabla z) - \beta \int_\Omega |\nabla z|^2 + (a_2 + 1) \int_\Omega w + \beta  \int_\Omega uz - b_2 \int_\Omega w^2 - \beta \int_\Omega z^2.
\end{split}
\end{equation}
Several applications of Young's inequality provide on $(0, \TM)$
\begin{equation*}\label{Young1}
2 \beta  \int_\Omega uw \leq \beta \int_\Omega u^2 + \beta \int_\Omega w^2, 
\quad \beta  \int_\Omega vw \leq \frac{\beta}{2}  \int_\Omega v^2 + \frac{\beta}{2}  \int_\Omega w^2,
\quad \beta  \int_\Omega uz \leq \frac{\beta}{2}  \int_\Omega u^2 + \frac{\beta}{2}  \int_\Omega z^2,
\end{equation*}
and for $\epsilon >0$ 
\begin{equation*}\label{Young4}
(a_1+1) \int_\Omega u \leq \epsilon \int_\Omega u^2 + \const{Ch}, \quad
(a_2+1) \int_\Omega w \leq \epsilon  \int_\Omega w^2 + \const{Chi} \quad \textrm{for all } t \in (0,\TM).
\end{equation*}
By inserting the above inequalities into estimate \eqref{SumB}, we have on $(0,\TM)$
\begin{equation*}
\begin{split}
&\frac{d}{dt} \left(\int_\Omega u + \int_\Omega w\right) + \int_\Omega u  + \int_\Omega w \leq -c_1 \int_\Omega |\nabla u|^2  - \beta \int_\Omega (\nabla u \cdot \nabla v) 
- \beta \int_\Omega |\nabla v|^2 + \left(\epsilon + \frac{3}{2}\beta - b_1\right) \int_\Omega u^2\\ 
& -c_2 \int_\Omega |\nabla w|^2 - \beta \int_\Omega (\nabla w \cdot \nabla z) - \beta \int_\Omega |\nabla z|^2 + \left(\epsilon + \frac{3}{2}\beta - b_2\right) \int_\Omega w^2+ \const{KB}\\ 
& \leq \int_\Omega \left[ -\left( \sqrt{c_1} \nabla u + \frac{\beta}{2\sqrt{c_1}} \nabla v \right) \cdot \left( \sqrt{c_1} \nabla u + \frac{\beta}{2\sqrt{c_1}} \nabla v \right) + \beta \left( \frac{\beta}{4c_1} - 1 \right) |\nabla v|^2 \right] - \left(b_1-\frac{3}{2}\beta-\epsilon\right) \int_\Omega u^2\\
&+ \int_\Omega \left[ -\left( \sqrt{c_2} \nabla w + \frac{\beta}{2\sqrt{c_2}} \nabla z \right) \cdot \left( \sqrt{c_2} \nabla w + \frac{\beta}{2\sqrt{c_2}} \nabla z \right) + \beta \left( \frac{\beta}{4c_2} - 1 \right) |\nabla z|^2 \right] - \left(b_2-\frac{3}{2}\beta-\epsilon\right) \int_\Omega w^2  + \const{KB}. 
\end{split}
\end{equation*}
Henceforth, by recalling $\alpha \max\{\chi,\xi\} \leq \beta<\min\left\{4c_1, 4 c_2,\frac{2 b_1}{3},\frac{2 b_2}{3}\right\}$ and by virtue of the arbitrariness of $\epsilon$, we obtain 
\begin{equation*}\label{C0}
\frac{d}{dt} \left(\int_\Omega u + \int_\Omega w \right)  + \int_\Omega u  + \int_\Omega w \leq \const{KB} \quad \textrm{for all } t \in (0,T_{\max}),
\end{equation*}
concluding the proof.
\end{proof}
\end{lemma}
\subsection{Temporal uniform bound on the mass for $\tau=1$}
Similarly to what we have explained in the elliptic scenario, let us show that the functional $\int_\Omega (u+w+Av^2+Bz^2)$, for proper positive $A,B$, is uniformly bounded  on $(0,\TM)$.
\begin{lemma}\label{BoundednessMassParab}
Let $\chi, \xi, a_1, a_2, c_1, c_2, h_1, h_2 >0$, $\alpha \in [0,1]$, $C_{\partial \Omega}$ be the trace constant in Lemma \ref{TraceLemma} and 
\begin{equation}\label{Estimate_bLemmaParab}
b_1> c_1 + \frac{(\xi \alpha)^2 h_2 C_{\partial \Omega}}{16 c_2}, \quad   
b_2 > c_2 + \frac{(\chi \alpha)^2 h_1 C_{\partial \Omega}}{16 c_1} \quad (\textrm{being } \alpha >0).  
\end{equation}
Then $u, w\in L^\infty((0,\TM);L^1(\Omega))$.
\begin{proof}
Arguing as in the proof of Lemma \ref{BoundednessMass} and invoking Lemma \ref{TraceLemma} (with $\psi=u$ and $\mathfrak{p}=2$), we obtain 
\begin{equation*}\label{estimateP}
\begin{split}
\frac{d}{dt} \int_\Omega u & \leq \frac{c_1}{C_{\partial \Omega}} \int_{\partial \Omega} u^2 
+ \frac{(\chi \alpha h_1)^2 C_{\partial \Omega}}{4 c_1}\int_{\partial \Omega} v^2 + a_1 \int_\Omega u - b_1 \int_\Omega u^2 - c_1 \int_\Omega  |\nabla u|^2\\
& \leq (c_1-b_1) \int_{\Omega} u^2 + \frac{(\chi \alpha h_1)^2 C_{\partial \Omega}}{4 c_1}\int_{\partial \Omega} v^2 + a_1 \int_\Omega u \quad \textrm{for all } t \in (0,\TM).
\end{split}
\end{equation*}
Moreover, applying the testing procedure to the second equation in \eqref{problem} and subsequently exploiting Young's inequality, we get 
\[
\begin{split}
\frac{(\chi \alpha)^2 h_1 C_{\partial \Omega}}{8 c_1} \frac{d}{dt} \int_{\Omega} v^2 
&= - \frac{(\chi \alpha h_1)^2 C_{\partial \Omega}}{4 c_1} \int_{\partial \Omega} v^2 - \frac{(\chi \alpha)^2 h_1 C_{\partial \Omega}}{4 c_1} \int_{\Omega} |\nabla v|^2 - \frac{(\chi \alpha)^2 h_1 C_{\partial \Omega}}{4 c_1} \int_{\Omega} v^2 + \frac{(\chi \alpha)^2 h_1 C_{\partial \Omega}}{4 c_1} \int_{\Omega} v w\\
&\leq - \frac{(\chi \alpha h_1)^2 C_{\partial \Omega}}{4 c_1} \int_{\partial \Omega} v^2 
- \frac{(\chi \alpha)^2 h_1 C_{\partial \Omega}}{4 c_1} \int_{\Omega} v^2 + \epsilon_1 \int_{\Omega} w^2 +  \left(\frac{(\chi \alpha)^2 h_1 C_{\partial \Omega}}{4 c_1}\right)^2
\frac{1}{4 \epsilon_1} \int_{\Omega} v^2 \quad \textrm{on }   (0, \TM),
\end{split}
\]
thereby yielding for all $t \in (0,\TM)$
\begin{equation}\label{Est_u_v}
\begin{split}
\frac{d}{dt} \int_{\Omega} \left( u + \frac{(\chi \alpha)^2 h_1 C_{\partial \Omega}}{8 c_1} v^2 \right) 
&\leq a_1 \int_{\Omega} u + (c_1 - b_1) \int_{\Omega} u^2 + \epsilon_1 \int_{\Omega} w^2 - \frac{(\chi \alpha)^2 h_1 C_{\partial \Omega}}{4 c_1} \left(1- \frac{(\chi \alpha)^2 h_1 C_{\partial \Omega}}{16 c_1 \epsilon_1}\right) \int_{\Omega} v^2.
\end{split}
\end{equation}
Similarly to the argument carried out above, we have the following estimate for $w$ and $z$ on $(0,\TM)$
\begin{equation*}\label{Est_w_z}
\begin{split}
\frac{d}{dt} \int_{\Omega} \left(w + \frac{(\xi \alpha)^2 h_2 C_{\partial \Omega}}{8 c_2} z^2 \right) 
&\leq a_2 \int_{\Omega} w + (c_2 - b_2) \int_{\Omega} w^2 + \epsilon_2 \int_{\Omega} u^2 - \frac{(\xi \alpha)^2 h_2 C_{\partial \Omega}}{4 c_2} \left(1- \frac{(\xi \alpha)^2 h_2 C_{\partial \Omega}}{16 c_2 \epsilon_2}\right) \int_{\Omega} z^2.
\end{split}
\end{equation*}
By adding relations \eqref{Est_u_v} and \eqref{Est_w_z}, we arrive at  
\begin{equation}\label{Est_w_z}
\begin{split}
&\frac{d}{dt} \int_{\Omega} \left( u + w+ \frac{(\chi \alpha)^2 h_1 C_{\partial \Omega}}{8 c_1} v^2 + \frac{(\xi \alpha)^2 h_2 C_{\partial \Omega}}{8 c_2} z^2\right) 
\leq a_1 \int_{\Omega} u + a_2 \int_{\Omega} w + (c_1 + \epsilon_2 - b_1) \int_{\Omega} u^2 + (c_2 +\epsilon_1 - b_2) \int_{\Omega} w^2\\
&- \frac{(\chi \alpha)^2 h_1 C_{\partial \Omega}}{4 c_1} \left(1- \frac{(\chi \alpha)^2 h_1 C_{\partial \Omega}}{16 c_1 \epsilon_1}\right) \int_{\Omega} v^2 
- \frac{(\xi \alpha)^2 h_2 C_{\partial \Omega}}{4 c_2} \left(1- \frac{(\xi \alpha)^2 h_2 C_{\partial \Omega}}{16 c_2 \epsilon_2}\right) \int_{\Omega} z^2 \quad \textrm{for all }   t \in (0, \TM).
\end{split}
\end{equation}
At this stage, in view of \eqref{Estimate_bLemmaParab}, one may proceed as in the proof of Lemma \ref{BoundednessMass} to select $\epsilon_1$ and $\epsilon_2$ in such a way that
\begin{equation*}
\frac{(\chi \alpha)^2 h_1 C_{\partial \Omega}}{16 c_1} < \epsilon_1 < b_2-c_2
\quad \textrm{and} \quad
\frac{(\xi \alpha)^2 h_2 C_{\partial \Omega}}{16 c_2} < \epsilon_2 < b_1-c_1.
\end{equation*}
Employing Young's inequality twice, we infer the following estimate
\[
\frac{d}{dt} \int_{\Omega} \left( u +w+ \frac{(\chi \alpha)^2 h_1 C_{\partial \Omega}}{8 c_1} v^2 + \frac{(\xi \alpha)^2 h_2 C_{\partial \Omega}}{8 c_2} z^2 \right) \leq -\const{Yy1} \int_{\Omega} \left( u + w+ \frac{(\chi \alpha)^2 h_1 C_{\partial \Omega}}{8 c_1} v^2 + \frac{(\xi \alpha)^2 h_2 C_{\partial \Omega}}{8 c_2} z^2 \right) + \const{Yy2} \quad \text{on } (0, \TM),
\]
which together with 
\[
\begin{split}
&\int_{\Omega} \left( u(x,0) + w(x,0) + \frac{(\chi \alpha)^2 h_1 C_{\partial \Omega}}{8 c_1} v^2(x,0)  + \frac{(\xi \alpha)^2 h_2 C_{\partial \Omega}}{8 c_2} z^2(x,0)  \right)dx\\
&=\int_{\Omega} \left( u_0(x) + w_0(x) + \frac{(\chi \alpha)^2 h_1 C_{\partial \Omega}}{8 c_1} v_0^2(x)  + \frac{(\xi \alpha)^2 h_2 C_{\partial \Omega}}{8 c_2} z_0^2(x)\right)dx
\end{split}
\]
leads to the statement.
\end{proof}
\end{lemma}
\subsection{Temporal uniform bound in $L^p(\Omega)$}
Let us turn now our attention to $L^p$-bounds, achieved by studying the temporal evolution of the functional $\frac{1}{p} \int_\Omega (u^p + w^p)$ for all $t \in (0,\TM)$ for the case $\tau=0$ and the functional $\frac1p \int_\Omega (u^p + w^p) + \frac{1}{p+1} \int_\Omega (\mathcal{C}_1 v^{p+1} + \mathcal{C}_2  z^{p+1})$ for all $t \in (0,\TM)$ for the case $\tau=1$ for suitable $\mathcal{C}_1, \mathcal{C}_2$, specified later. 
\subsubsection{The case $\tau=0$}
\begin{lemma}\label{Boundedness_u_w}
Let $n \in \N$ and the hypotheses of Lemma \ref{BoundednessMass} be complied. Then $u,w \in L^{\infty}((0, \TM); L^{p}(\Omega))$ for all $p>1$.
\begin{proof}
By testing the first equation in \eqref{problem} against $u^{p-1}$ and integrating over 
$\Omega$, we deduce
\begin{equation}\label{E1}
\begin{split}
\frac{1}{p} \frac{d}{dt} \int_{\Omega} u^p= & \int_{\partial \Omega} u^{p-1}(u_{\nu}-\chi u v_{\nu})-(p-1) \int_{\Omega} u^{p-2} |\nabla u|^2+(p-1) \chi \int_{\Omega} u^{p-1} \nabla u \cdot \nabla v\\
& +a_1 \int_{\Omega} u^p - b_1 \int_{\Omega} u^{p+1}-c_1 \int_{\Omega} u^{p-1}|\nabla u|^2\\
= & \alpha \chi h_1 \int_{\partial \Omega} u^p v - (p-1) \int_{\Omega} u^{p-2}|\nabla u|^2+\frac{(p-1)}{p} \chi \int_{\partial \Omega} u^p v_{\nu}-\frac{(p-1)}{p} \chi \int_{\Omega} u^p \Delta v\\
& +a_1  \int_{\Omega} u^p -b_1  \int_{\Omega} u^{p+1}-\frac{4 c_1}{(p+1)^2} \int_{\Omega}\left|\nabla u^{\frac{p+1}{2}}\right|^2 \textrm{ for all } t \in (0, \TM). 
\end{split}
\end{equation}
With regard to the boundary integrals, exploiting the Robin boundary condition in \eqref{problem}, we obtain 
\begin{equation}\label{E2}
\int_{\partial \Omega} u^p v_{\nu}=-h_1 \int_{\partial \Omega} u^p v \leq 0 \quad \textrm {on } (0, \TM),
\end{equation}
so that the only remaining boundary term to be handled is $\alpha \chi h_1 \int_{\partial \Omega} u^p v$. This term can be estimated by means of a combined application of Young's inequality and Lemma \ref{TraceLemma} (with $\psi=u^{\frac{p+1}{2}}, \psi=v^{\frac{p+1}{2}}$ and $\mathfrak{p}=2$), which leads to
\begin{equation}\label{E3}
\begin{split}
\alpha \chi h_1 \int_{\partial \Omega} u^p v &\leq \frac{2 c_1}{(p+1)^2 C_{\partial \Omega}} \int_{\partial \Omega} u^{p+1}+\const{1} \int_{\partial \Omega} v^{p+1}\\
& \leq \frac{2 c_1}{(p+1)^2} \int_{\Omega} u^{p+1}+\frac{2 c_1}{(p+1)^2} \int_{\Omega}\left|\nabla u^{\frac{p+1}{2}}\right|^{2}+\const{2}\left(\int_{\Omega} v^{p+1}+\int_{\Omega}\left|\nabla v^{\frac{p+1}{2}}\right|^{2}\right)\\
& \leq \frac{2 c_1}{(p+1)^2} \int_{\Omega} u^{p+1}+\frac{2 c_1}{(p+1)^2} \int_{\Omega}\left|\nabla u^{\frac{p+1}{2}}\right|^{2}+\const{3}\left(\int_{\Omega} v^{p+1}+\int_{\Omega}|\nabla v|^{p+1}\right)\\
& \leq \frac{2 c_1}{(p+1)^2} \int_{\Omega} u^{p+1}+\frac{2 c_{1}}{(p+1)^2} \int_{\Omega}\left|\nabla u^{\frac{p+1}{2}}\right|^{2}+\const{4} \int_\Omega w^{p+1} \quad \textrm{for all } t \in (0, \TM),
\end{split}
\end{equation}
where, in the last step, we have used relation \eqref{EstReg}; accordingly, the constant 
$\const{4}$ also depends on $C_{E}$. 
By exploiting the expression of $\Delta v$ in the second equation of problem \eqref{problem}, the Young inequality yields 
\begin{equation}\label{E4}
\begin{split}
- \chi \frac{(p-1)}{p}\int_{\Omega} u^p \Delta v &= - \chi \frac{(p-1)}{p}\int_{\Omega} u^p v + \chi \frac{(p-1)}{p}\int_{\Omega}u^p w \leq b_1 \int_{\Omega}u^{p+1}+ \const{5} \int_{\Omega} w^{p+1} \quad \textrm{on } (0,\TM).
\end{split}
\end{equation}
By inserting bounds \eqref{E2}, \eqref{E3} and \eqref{E4} into estimate \eqref{E1}, 
we get for all $t \in (0, \TM)$
\begin{equation}\label{E5}
\frac{1}{p} \frac{d}{dt} \int_{\Omega} u^p \leq -\frac{4(p-1)}{p^2} \int_{\Omega}\left|\nabla u^{\frac{p}{2}}\right|^2 +a_1  \int_{\Omega} u^p + \frac{2c_1}{(p+1)^2} \int_{\Omega} u^{p+1}
+ \const{7} \int_{\Omega} w^{p+1} -\frac{2 c_1}{(p+1)^2} \int_{\Omega}\left|\nabla u^{\frac{p+1}{2}}\right|^2. 
\end{equation}
Now testing the third equation in \eqref{problem} against $w^{p-1}$ and integrating over 
$\Omega$ yield
\begin{equation}\label{Ew1}
\begin{split}
\frac{1}{p} \frac{d}{dt} \int_{\Omega} w^p = & \alpha \xi h_2 \int_{\partial \Omega} w^p z - (p-1) \int_{\Omega} w^{p-2}|\nabla w|^2+\frac{(p-1)}{p} \xi \int_{\partial \Omega} w^p z_{\nu}-\frac{(p-1)}{p} \xi \int_{\Omega} w^p \Delta z\\
& +a_2  \int_{\Omega} w^p -b_2  \int_{\Omega} w^{p+1}-\frac{4 c_2}{(p+1)^2} \int_{\Omega}\left|\nabla w^{\frac{p+1}{2}}\right|^2 \quad \textrm{on } (0, \TM). 
\end{split}
\end{equation}
Similar considerations to those made above for the integrals over the boundary of $\Omega$ lead to 
\begin{equation}\label{Ew2}
\int_{\partial \Omega} w^p z_{\nu}=-h_2 \int_{\partial \Omega} w^p z \leq 0 \quad \textrm {for all } t \in (0, \TM),
\end{equation}
and employing again estimate \eqref{EstReg}
\begin{equation}\label{Ew3}
\begin{split}
\alpha \xi h_2 \int_{\partial \Omega} w^p z & \leq \frac{2 c_2}{(p+1)^2} \int_{\Omega} w^{p+1}+\frac{2 c_2}{(p+1)^2} \int_{\Omega}\left|\nabla w^{\frac{p+1}{2}}\right|^{2}+\const{3w}\left(\int_{\Omega} z^{p+1}+\int_{\Omega}|\nabla z|^{p+1}\right)\\
& \leq \frac{2 c_2}{(p+1)^2} \int_{\Omega} w^{p+1}+\frac{2 c_2}{(p+1)^2} \int_{\Omega}\left|\nabla w^{\frac{p+1}{2}}\right|^{2}+\const{w4} \int_\Omega u^{p+1} \quad \textrm{on } (0,\TM).
\end{split}
\end{equation}
By using the expression for $\Delta z$ from the fourth equation in problem \eqref{problem}, Young's inequality gives
\begin{equation}\label{Ew4}
\begin{split}
- \xi \frac{(p-1)}{p}\int_{\Omega} w^p \Delta z &= - \xi \frac{(p-1)}{p}\int_{\Omega} w^p z + \xi \frac{(p-1)}{p}\int_{\Omega} w^p u \leq \const{5w} \int_{\Omega}u^{p+1}+ b_2 \int_{\Omega} w^{p+1} \quad \textrm{for all } t \in (0, \TM).
\end{split}
\end{equation}
Plugging bounds \eqref{Ew2}, \eqref{Ew3} and \eqref{Ew4} into estimate \eqref{Ew1} infers
\begin{equation}\label{Ew5}
\frac{1}{p} \frac{d}{dt} \int_{\Omega} w^p \leq -\frac{4(p-1)}{p^2} \int_{\Omega}\left|\nabla w^{\frac{p}{2}}\right|^2 +a_2  \int_{\Omega} w^p + \const{6w} \int_{\Omega} u^{p+1} + \frac{2 c_2}{(p+1)^2} \int_{\Omega} w^{p+1} -\frac{2 c_2}{(p+1)^2} \int_{\Omega}\left|\nabla w^{\frac{p+1}{2}}\right|^2  \quad \textrm{on } (0, \TM). 
\end{equation}
A double application of Young's inequality gives
\begin{equation}\label{Y_uw}
\int_{\Omega} u^p \leq \int_{\Omega} u^{p+1} + \const{up} \quad \textrm{and} \quad 
\int_{\Omega} w^p \leq \int_{\Omega} w^{p+1} + \const{wp} \quad \textrm{for all } t \in (0,\TM).
\end{equation}
Now we add the estimates \eqref{E5} and \eqref{Ew5}, and we take into account bounds \eqref{Y_uw}, so entailing on $(0,\TM)$
\begin{equation}\label{E6}
\begin{split}
\frac{1}{p} \frac{d}{dt} \left(\int_{\Omega} u^p + \int_{\Omega} w^p \right) 
\leq& -\frac{4(p-1)}{p^2} \int_{\Omega}\left|\nabla u^{\frac{p}{2}}\right|^2 + \const{6wu} \int_{\Omega} u^{p+1} -\frac{2 c_1}{(p+1)^2} \int_{\Omega}\left|\nabla u^{\frac{p+1}{2}}\right|^2\\ 
&-\frac{4(p-1)}{p^2} \int_{\Omega}\left|\nabla w^{\frac{p}{2}}\right|^2 + \const{7wu} \int_{\Omega} w^{p+1} -\frac{2 c_2}{(p+1)^2} \int_{\Omega}\left|\nabla w^{\frac{p+1}{2}}\right|^2 + \const{su}. 
\end{split}
\end{equation}
Now, we manipulate the integral terms $\const{6wu} \int_{\Omega} u^{p+1}$ and $\const{7wu} \int_{\Omega} w^{p+1}$ by a combined application of Young's and the Gagliardo--Nirenberg inequalities. By exploiting the boundedness of the mass for $u$ and $w$ given in Lemma \ref{BoundednessMass} and setting $\theta_1:=\frac{p}{p+\frac{2}{n}} \in(0,1)$, we have
\begin{equation}\label{E_GN_u}
\begin{split}
\const{6wu} \int_{\Omega} u^{p+1} &= \const{6wu} \|u^{\frac{p+1}{2}}\|^2 _{L^2(\Omega)} \leq \const{GN1} \|\nabla u^{\frac{p+1}{2}}\|^{2 \theta_1}_{L^2(\Omega)}\|u^{\frac{p+1}{2}}\|^{2(1 -\theta_1)}_{L^{\frac{2}{p+1}}(\Omega)} + \const{GN1} 
\|u^{\frac{p+1}{2}}\|_{L^{\frac{2}{p+1}}(\Omega)}^2\\
& \leq \const{GN2}\left(\int_{\Omega}|\nabla u^{\frac{p+1}{2}}|^{2}\right)^{\theta_{1}} +\const{GN3} \leq \frac{2 c_1}{(p+1)^2} \int_{\Omega}\left|\nabla u^{\frac{p+1}{2}}\right|^2 + \const{GN4} \quad \textrm{for all } t \in (0,\TM)
\end{split}
\end{equation}
and 
\begin{equation}\label{E_GN_w}
\begin{split}
\const{7wu} \int_{\Omega} w^{p+1} &= \const{7wu} \|w^{\frac{p+1}{2}}\|^2 _{L^2(\Omega)} \leq \const{GN11} \|\nabla w^{\frac{p+1}{2}}\|^{2 \theta_1}_{L^2(\Omega)}\|w^{\frac{p+1}{2}}\|^{2(1 -\theta_1)}_{L^{\frac{2}{p+1}}(\Omega)} + \const{GN11} 
\|w^{\frac{p+1}{2}}\|_{L^{\frac{2}{p+1}}(\Omega)}^2\\
& \leq \const{GN22}\left(\int_{\Omega}|\nabla w^{\frac{p+1}{2}}|^{2}\right)^{\theta_{1}} +\const{GN33} \leq \frac{2 c_2}{(p+1)^2} \int_{\Omega}\left|\nabla w^{\frac{p+1}{2}}\right|^2 + \const{GN44} \quad \textrm{on } (0,\TM).
\end{split}
\end{equation}
Inserting bounds \eqref{E_GN_u} and \eqref{E_GN_w} into estimate \eqref{E6} gives 
\begin{equation}\label{E7}
\frac{1}{p} \frac{d}{dt} \left(\int_{\Omega} u^p + \int_{\Omega} w^p \right) 
\leq -\frac{4(p-1)}{p^2} \int_{\Omega}\left|\nabla u^{\frac{p}{2}}\right|^2 -\frac{4(p-1)}{p^2} \int_{\Omega}\left|\nabla w^{\frac{p}{2}}\right|^2 + \const{fin} \quad \textrm{for all } t \in (0,\TM). 
\end{equation}
By invoking the boundedness of the mass for $u$ and $w$ stated in Lemma \ref{BoundednessMass} and setting $\theta_2:=\frac{p-1}{p -1+\frac{2}{n}} \in(0,1)$, 
again a combination of Young's and the Gagliardo--Nirenberg inequalities provides
\begin{equation}\label{E_GN_up}
\begin{split}
\frac{1}{p} \int_{\Omega} u^p &= \frac{1}{p} \|u^{\frac{p}{2}}\|^2 _{L^2(\Omega)} \leq \const{GN1u} \|\nabla u^{\frac{p}{2}}\|^{2 \theta_2}_{L^2(\Omega)}\|u^{\frac{p}{2}}\|^{2(1 -\theta_2)}_{L^{\frac{2}{p}}(\Omega)} + \const{GN1u} 
\|u^{\frac{p}{2}}\|_{L^{\frac{2}{p}}(\Omega)}^2\\
& \leq \const{GN2u}\left(\int_{\Omega}|\nabla u^{\frac{p}{2}}|^{2}\right)^{\theta_2} +\const{GN3u} \leq \frac{4(p-1)}{p^2} \int_{\Omega}\left|\nabla u^{\frac{p}{2}}\right|^2 + \const{GN4u} \quad \textrm{on } (0,\TM)
\end{split}
\end{equation}
and 
\begin{equation}\label{E_GN_wp}
\begin{split}
\frac{1}{p} \int_{\Omega} w^p &= \frac{1}{p} \|w^{\frac{p}{2}}\|^2 _{L^2(\Omega)} \leq \const{GN1w} \|\nabla w^{\frac{p}{2}}\|^{2 \theta_2}_{L^2(\Omega)}\|w^{\frac{p}{2}}\|^{2(1 -\theta_2)}_{L^{\frac{2}{p}}(\Omega)} + \const{GN1w} 
\|w^{\frac{p}{2}}\|_{L^{\frac{2}{p}}(\Omega)}^2\\
& \leq \const{GN2w}\left(\int_{\Omega}|\nabla w^{\frac{p}{2}}|^{2}\right)^{\theta_2} +\const{GN3w} \leq \frac{4(p-1)}{p^2} \int_{\Omega}\left|\nabla w^{\frac{p}{2}}\right|^2 + \const{GN4w} \quad \textrm{for all } t \in (0,\TM).
\end{split}
\end{equation}
By virtue of bounds \eqref{E_GN_up} and \eqref{E_GN_wp}, we deduce from estimate \eqref{E7}
\begin{equation*}
\frac{d}{dt} \left(\int_{\Omega} u^p + \int_{\Omega} w^p \right) 
\leq \const{cc} -\int_{\Omega} u^p -\int_{\Omega} w^p \quad \textrm{on } (0,\TM), 
\end{equation*}
which implies $u,w \in L^{\infty}((0, \TM); L^{p}(\Omega))$ for all $p>1$.
\end{proof}
\end{lemma}
\subsubsection{The case $\tau=1$}
\begin{lemma}\label{BoundednessParab_u_w}
Let $n \in \N$ and the hypotheses of Lemma \ref{BoundednessMassParab} be complied. Then $u,w \in L^{\infty}((0, \TM); L^p(\Omega))$ for all $p>1$.
\begin{proof}
Proceeding analogously to the argument employed in \eqref{E1}, and additionally incorporating \eqref{E2} and the first inequality in \eqref{E3} with $\const{1}=h_1 \mathcal{C}_1$, and observing that $\left(a_1+\frac{1}{p}\right) \int_\Omega u^p \leq b_1 \int_\Omega u^{p+1} + \const{GGG1}$, we derive the following estimate on $(0,\TM)$
\begin{equation}\label{Ep1}
\begin{split}
\frac{1}{p} \frac{d}{dt} \int_{\Omega} u^p + \frac{1}{p} \int_{\Omega} u^p \leq& 
\frac{2 c_1}{(p+1)^2 C_{\partial \Omega}} \int_{\partial \Omega} u^{p+1}+\mathcal{C}_1 h_1 \int_{\partial \Omega} v^{p+1} - (p-1) \int_{\Omega} u^{p-2}|\nabla u|^2
-\frac{(p-1)}{p} \chi \int_{\Omega} u^p \Delta v\\
& + \left(a_1+\frac{1}{p}\right)  \int_{\Omega} u^p -b_1  \int_{\Omega} u^{p+1}-\frac{4 c_1}{(p+1)^2} \int_{\Omega}\left|\nabla u^{\frac{p+1}{2}}\right|^2 \\
& \leq \frac{2 c_1}{(p+1)^2 C_{\partial \Omega}} \int_{\partial \Omega} u^{p+1}+\mathcal{C}_1 h_1 \int_{\partial \Omega} v^{p+1}
-\frac{(p-1)}{p} \chi \int_{\Omega} u^p \Delta v-\frac{4 c_1}{(p+1)^2} \int_{\Omega}\left|\nabla u^{\frac{p+1}{2}}\right|^2 + \const{GGG1}, 
\end{split}
\end{equation}
where $\mathcal{C}_1:= (p+1)^{p-1} (\alpha \chi)^{p+1} \left(\frac{h_1 p}{2c_1} C_{\partial \Omega}\right)^p$.

Analogous testing procedures applied to the second equation in \eqref{problem}, together with Young's inequality, yield that
\begin{equation*}\label{Stima_v_P}
\begin{split}
\frac1{p+1} \frac{d}{dt} \int_\Omega v^{p+1} &= \int_{\partial \Omega} v^p v_\nu - p \int_\Omega v^{p-1} |\nabla v|^2- \int_\Omega v^{p+1}+ \int_\Omega w v^p \\ 
&\le - h_1 \int_{\partial \Omega} v^{p+1}- \frac1{p+1}\int_\Omega v^{p+1}+ \frac1{p+1}\int_\Omega w^{p+1}
\quad\textrm{on $(0,\TM)$},
\end{split}
\end{equation*}
which inserted in \eqref{Ep1} gives
\begin{align}\label{ene:Lp:3}
 &\frac{d}{dt} \left( \frac1p \int_\Omega u^p + \frac{\mathcal{C}_1}{p+1} \int_\Omega v^{p+1} \right) + \frac{1}{p} \int_\Omega u^p + \frac{\mathcal{C}_1}{(p+1)} \int_\Omega v^{p+1}
 \le \frac{2c_1}{(p+1)^2 C_{\partial \Omega}} \int_{\partial \Omega} u^{p+1}
 - \frac{p-1}p \chi \int_\Omega u^p \Delta v \notag \\
 &\quad\,\quad\,- \frac{4c_1}{(p+1)^2} \int_\Omega |\nabla u^{\frac{p+1}2}|^2 + \frac{\mathcal{C}_1}{(p+1)}  \int_\Omega w^{p+1} + \const{GGG1} \quad\textrm{for all $t \in (0,\TM)$}.
 \end{align}
Applying Lemma \ref{TraceLemma} with $\psi=u^{\frac{p+1}{2}}$ and $\mathfrak{p}=2$, together with Young's inequality, we obtain
\begin{equation*}\label{Parab2}
\frac{2c_1}{(p+1)^2 C_{\partial \Omega}} \int_{\partial \Omega} u^{p+1} - \frac{p-1}p \chi \int_\Omega u^p \Delta v \leq
\const{GG1} \int_{\Omega} u^{p+1} + \frac{2c_1}{(p+1)^2} \int_{\Omega} |\nabla u^{\frac{p+1}2}|^2 + \const{GVc1}
\int_\Omega |\Delta v|^{p+1} \quad \textrm{on } (0,\TM),
\end{equation*}
which plugged in \eqref{ene:Lp:3} implies 
\begin{equation}\label{Parab_u_v}
\begin{split}
&\frac{d}{dt}\left( \frac1p \int_\Omega u^p + \frac{\mathcal{C}_1}{p+1} \int_\Omega v^{p+1}\right)+ \left( \frac1p \int_\Omega u^p 
+ \frac{\mathcal{C}_1}{p+1}\int_\Omega v^{p+1} \right)\\
&\quad\, \le - \frac{2c_1}{(p+1)^2} \int_\Omega |\nabla u^{\frac{p+1}2}|^2
+ \const{GG1} \int_\Omega u^{p+1} + \const{GVc1} \int_\Omega |\Delta v|^{p+1}+ \const{Spe} \int_\Omega w^{p+1}
+ \const{GGG1} \quad \textrm{for all } t \in (0,\TM).
\end{split}
\end{equation}
In a way analogous to the previous argument, but now considering the third and fourth equations of system \eqref{problem}, we arrive at the following estimate
\begin{equation}\label{Parab_w_z}
\begin{split}
&\frac{d}{dt}\left( \frac1p \int_\Omega w^p + \frac{\mathcal{C}_2}{p+1} \int_\Omega z^{p+1}\right)+ \left( \frac1p \int_\Omega w^p 
+ \frac{\mathcal{C}_2}{p+1}\int_\Omega z^{p+1} \right)\\
&\quad\, \le - \frac{2c_2}{(p+1)^2} \int_\Omega |\nabla w^{\frac{p+1}2}|^2
+ \const{SG11} \int_\Omega w^{p+1} + \const{GVce1} \int_\Omega |\Delta z|^{p+1}+ \const{Sppe} \int_\Omega u^{p+1}
+ \const{GGGp1} \quad \textrm{on } (0,\TM),
\end{split}
\end{equation}
with $\mathcal{C}_2:= (p+1)^{p-1} (\alpha \xi)^{p+1} \left(\frac{h_2 p}{2c_2} C_{\partial \Omega}\right)^p$. 
Summing estimates \eqref{Parab_u_v} and \eqref{Parab_w_z}, we obtain for all $t \in (0,\TM)$
\begin{equation*}\label{Parab_uw_vz}
\begin{split}
&\frac{d}{dt}\left( \frac1p \int_\Omega u^p + \frac1p \int_\Omega w^p + \frac{\mathcal{C}_1}{p+1} \int_\Omega v^{p+1} + \frac{\mathcal{C}_2}{p+1} \int_\Omega z^{p+1}\right)
+ \left( \frac1p \int_\Omega u^p + \frac1p \int_\Omega w^p + \frac{\mathcal{C}_1}{p+1}\int_\Omega v^{p+1} + \frac{\mathcal{C}_2}{p+1}\int_\Omega z^{p+1} \right)\\
&\leq - \frac{2c_1}{(p+1)^2} \int_\Omega |\nabla u^{\frac{p+1}2}|^2 - \frac{2c_2}{(p+1)^2} \int_\Omega |\nabla w^{\frac{p+1}2}|^2
+ \const{GVc1} \int_\Omega |\Delta v|^{p+1}+ \const{GVce1} \int_\Omega |\Delta z|^{p+1}
+ \const{SGp1} \int_\Omega u^{p+1}+\const{Sppe} \int_\Omega w^{p+1}
+ \const{GGGSp1},
\end{split}
\end{equation*}
or equivalently 
\begin{equation*}\label{Parab1_uw_vz}
\begin{split}
&\frac{d}{dt} \left( e^t  \left(\frac1p \int_\Omega u^p + \frac1p \int_\Omega w^p + \frac{\mathcal{C}_1}{p+1} \int_\Omega v^{p+1} + \frac{\mathcal{C}_2}{p+1} \int_\Omega z^{p+1}
\right)\right) \leq - \frac{2c_1 e^t}{(p+1)^2} \int_\Omega |\nabla u^{\frac{p+1}2}|^2 - \frac{2c_2 e^t}{(p+1)^2} \int_\Omega |\nabla w^{\frac{p+1}2}|^2\\
&+ e^t \const{GVc1} \int_\Omega |\Delta v|^{p+1}+ e^t \const{GVce1} \int_\Omega |\Delta z|^{p+1}+ e^t \const{SGp1} \int_\Omega u^{p+1} 
+ e^t \const{Sppe} \int_\Omega w^{p+1}+ e^t\const{GGGSp1} \quad \textrm{on } (0,\TM).  
\end{split}
\end{equation*}
Upon integrating the previous estimate over the interval $(0,t)$, we arrive at 
\begin{equation*}\label{Parab12_uw_vz}
\begin{split}
&e^t  \left(\frac1p \int_\Omega u^p + \frac1p \int_\Omega w^p + \frac{\mathcal{C}_1}{p+1} \int_\Omega v^{p+1} + \frac{\mathcal{C}_2}{p+1} \int_\Omega z^{p+1}
\right) \\
&\leq \const{GGGpp1} - \frac{2c_1}{(p+1)^2} \int^t_0 e^s \left(\int_\Omega |\nabla u^{\frac{p+1}2}(\cdot,s)|^2\right)\,ds - \frac{2c_2}{(p+1)^2} \int^t_0 e^s \left(\int_\Omega |\nabla w^{\frac{p+1}2}(\cdot,s)|^2\right)\,ds\\ 
&+ \const{GVc1}  \int^t_0 e^s \left(\int_\Omega |\Delta v(\cdot,s)|^{p+1}\right)\,ds 
+ \const{GVce1}  \int^t_0 e^s \left(\int_\Omega |\Delta z(\cdot,s)|^{p+1}\right)\,ds\\
& + \const{SGp1}  \int^t_0 e^s \left( \int_\Omega u^{p+1}(\cdot,s) \right)\,ds
+ \const{Sppe}  \int^t_0 e^s \left(\int_\Omega w^{p+1}(\cdot,s)\right)\,ds+ \const{GGGSp1} (e^t-1) \quad \textrm{for all } t \in (0,\TM).  
\end{split}
\end{equation*}
Using \eqref{tau1} from Lemma \ref{RegParab} (with $\phi=v, z$, $q=p+1$, and $\psi=u, w$), we deduce that
\begin{equation}\label{Parab22_uw_vz}
\begin{split}
&e^t  \left(\frac1p \int_\Omega u^p + \frac1p \int_\Omega w^p + \frac{\mathcal{C}_1}{p+1} \int_\Omega v^{p+1} + \frac{\mathcal{C}_2}{p+1} \int_\Omega z^{p+1}
\right) \\
&\leq \const{GGGpp11} - \frac{2c_1}{(p+1)^2} \int^t_0 e^s \left(\int_\Omega |\nabla u^{\frac{p+1}2}(\cdot,s)|^2\right)\,ds - \frac{2c_2}{(p+1)^2} \int^t_0 e^s \left(\int_\Omega |\nabla w^{\frac{p+1}2}(\cdot,s)|^2\right)\,ds\\
& + \const{SGpp1}  \int^t_0 e^s \left( \int_\Omega u^{p+1}(\cdot,s) \right)\,ds
+ \const{Spp1e}  \int^t_0 e^s \left(\int_\Omega w^{p+1}(\cdot,s)\right)\,ds+ \const{GGGSp1}(e^t-1) \quad \textrm{on } (0,\TM).  
\end{split}
\end{equation}
Invoking relations \eqref{E_GN_u} and \eqref{E_GN_w}, we obtain, up to multiplicative constants,
\begin{equation*} \label{GNS4}
\const{SGpp1} \int_\Omega u^{p+1} \leq \frac{2c_1}{(p+1)^2} \int_\Omega |\nabla u^{\frac{p+1}{2}}|^2 + \const{As4}
\quad \textrm{for all } t \in (0,\TM),
\end{equation*}
and
\begin{equation*} \label{GNS4p}
\const{Spp1e} \int_\Omega w^{p+1} \leq \frac{2c_2}{(p+1)^2} \int_\Omega |\nabla w^{\frac{p+1}{2}}|^2 + \const{As4p}
\quad \textrm{on }(0,\TM),
\end{equation*}
which inserted into \eqref{Parab22_uw_vz}, yields 
\[
e^t  \left(\frac1p \int_\Omega u^p + \frac1p \int_\Omega w^p + \frac{\mathcal{C}_1}{p+1} \int_\Omega v^{p+1} + \frac{\mathcal{C}_2}{p+1} \int_\Omega z^{p+1}
\right) \le \const{GGGpp11}+ \const{Miso} (e^t-1) \quad \textrm{for all } t\in(0,\TM),
\]
thus reaching the claim.
\end{proof}
\end{lemma}
\begin{remark}[Details on the cases $\tau_1=0$ and $\tau_2=1$, and vice versa, for problem \eqref{problemRemark}]\label{Details}
As anticipated in Remark~\ref{ConsiderationOpenProbl}, we now provide some indications on the validity of Theorem \ref{theoremlocalParab} in the case $\tau_1=0$ and $\tau_2=1$ for problem \eqref{problemRemark}. The complementary case $\tau_1=1$ and $\tau_2=0$ will not be discussed, since the argument is entirely analogous. As for the mass control, this follows from the analysis of the time evolution of the functional
\[
\int_\Omega \left(u+w+\frac{(\xi \alpha)^2 h_2 C_{\partial \Omega}}{8 c_2} z^2\right) \quad \textrm{on} \; (0,\TM),
\]
which yields to a similar expression than \eqref{Est_w_z}.

On the other hand, this mass control can be subsequently used to establish the $L^p$-estimates. More precisely, by exploiting the functional
\[
 \frac{1}{p}\int_{\Omega} u^p
+ \frac{1}{p}\int_{\Omega} w^p
+ \frac{\mathcal{C}_2}{p+1}\int_{\Omega} z^{p+1} \quad \textrm{on} \; (0,\TM),
\]
a bound of the type \eqref{Parab22_uw_vz} can be achieved.  
\label{page}
\end{remark}
\section{Proofs of the theorems}
With the preparations made above, we are now in a position to prove the stated results.
\subsection{Proof of Theorem \ref{theoremlocal}}
By virtue of assumptions \eqref{Estimate_b} or \eqref{Estimate_Bag0}, we can apply Lemma \ref{Boundedness_u_w} from which it follows that $u, w \in L^{\infty}((0,\TM); L^p(\Omega))$ for all $p>1$, as a consequence, by exploiting bound \eqref{Gradient} 
(with $\phi=v$, $\psi=w$ and $\phi=z$, $\psi=u$) this implies $\nabla v, \nabla z \in L^{\infty}((0,\TM); L^\infty(\Omega))$. 
Henceforth, by invoking Lemma \ref{LemmaMoserType} (with $\psi=u$, $f=-\chi \nabla v$, $a=a_1$, $b=b_1$, $c=c_1$ and with $\psi=w$, $f=-\xi\nabla z$, $a=a_2$, $b=b_2$, $c=c_2$),  we achieve the claim.
\qed

\subsection{Proof of Theorem \ref{theoremlocalParab}}
By virtue of assumption \eqref{Estimate_bP}, Lemma \ref{BoundednessParab_u_w} implies that $u, w \in L^{\infty}((0,\TM); L^p(\Omega))$ for all $p>1$. Consequently, by exploiting estimate \eqref{tau1extension} (with $\phi=v$, $\psi=w$ and $\phi=z$, $\psi=u$), it follows that $\nabla v, \nabla z \in L^{\infty}((0,\TM); L^\infty(\Omega))$. 
From this point onward, the conclusion is obtained by applying Lemma \ref{LemmaMoserType}  in the same manner as in the elliptic case.
\qed

\subsubsection*{Acknowledgements}
SF and GV are members of the Gruppo Nazionale per l'Analisi Matematica, la Probabilit\`a e le loro Applicazioni (GNAMPA) of the Istituto Nazionale di Alta Matematica (INdAM), and they are partially supported by the research project {\em Partial Differential Equations and their role in understanding natural phenomena} (2023, CUP F23C25000080007), funded by  \href{https://www.fondazionedisardegna.it/}{Fondazione di Sardegna} and by GNAMPA-INdAM Project {\em Modelli di reazione-diffusione-trasporto: dall'analisi alle applicazioni} (CUP E53C25002010001). The work of KS is supported by the National Research Foundation of Korea (NRF) grant funded by the Korea government (MSIT, RS-2024-00350215).

\end{document}